\crefname{equation}{}{}
\pgfplotsset{compat=1.10}
\newcommand{\xdx}[1]{{{\rm d}#1}}
\def\xdrv#1#2{\frac{{\rm d}#1}{{\rm d}#2}}
\def\cf{\emph{cf.\/}}\def\eg{\emph{e.g.\/}}
\def\@Rref#1{\hbox{\rm \ref{#1}}}
\def\Rref#1{\@Rref{#1}}
\def\xC{{\rm C}}
\def\xCinfty{{\rm C}^{\infty}} 
\def\xH{{\rm H}}
\def\xWn#1{{\rm W}^#1}
\def\xLtwo{{\rm L}^{2}}
\def\xLinfty{{\rm L}^{\infty}} 
\def\xA{{\rm A}}
\def\xX{{\rm X}}
\def\xY{{\rm Y}}
\def\xE{{\rm E}}
\def\xF{{\rm F}}
\def\xB{{\rm B}}
\def\xO{{\rm O}}
\def\xM{{\rm M}}
\def\xG{{\rm G}}
\def\xK{{\rm K}}
\def\RR{{\mathbb{R}}}
\newcommand{\ssubset}{\subset\joinrel\subset}
\DeclareMathOperator*{\esssup}{ess\,sup}
\newcommand*{\toccontents}{\@starttoc{toc}}
\numberwithin{equation}{section}
\theoremstyle{plain}
\newtheorem{thrm}{Theorem}[section]
\newtheorem*{thrmB}{Theorem}
\newtheorem{lmm}[thrm]{Lemma}
\newtheorem{prpstn}[thrm]{Proposition}
\newtheorem{ssmptn}[thrm]{Assumption}
\theoremstyle{definition}
\newtheorem{dfntn}[thrm]{Definition}
\newtheorem{cndtn}{Condition}
\newtheorem{xmpl}[thrm]{Example}
\newtheorem{rmrk}[thrm]{Remark}
\theoremstyle{plain}
\begin{document}\linespread{1.06}\selectfont
	
	\author{Laurent Mertz\,\footnote{Department of Mathematics, City University of Hong Kong, Kowloon, Hong Kong, China, e-mail: \href{mailto:lmertz@cityu.edu.hk}{lmertz@cityu.edu.hk}}\and
		Vahagn~Nersesyan\,\footnote{NYU-ECNU Institute of Mathematical Sciences at NYU Shanghai, 3663 Zhongshan Road North, Shanghai, 200062, China, e-mail: \href{mailto:vahagn.nersesyan@nyu.edu}{Vahagn.Nersesyan@nyu.edu}}\and
		Manuel~Rissel\,\footnote{NYU-ECNU Institute of Mathematical Sciences at NYU Shanghai, 3663 Zhongshan Road North, Shanghai, 200062, China, e-mail: \href{mailto:Manuel.Rissel@nyu.edu}{Manuel.Rissel@nyu.edu}}
	}
	
	\date{}
	\title{Exponential mixing of constrained random dynamical systems via controllability conditions}
	\maketitle
	
	\begin{abstract}

		We provide deterministic controllability conditions that imply exponential mixing properties for randomly forced constrained dynamical systems with possibly unbounded state space. As an application, new ergodicity results are obtained for non-smooth models in elasto-plasticity driven by various types of noise, including white noise. It is thereby illustrated how tools from control theory can be utilized to tackle regularity issues that commonly arise in the qualitative study of constrained systems.

		\quad
		
		\begin{center}
			\textbf{Keywords} \\ differential inclusions, elasto-plasticity, ergodicity, exponential mixing, controllability, white noise, decomposable noise
			\\
			{\bf MSC2020} \\ 37A25, 37A30, 49J52, 60H10, 74C05, 93B05
		\end{center}
	
	\end{abstract}

	\setcounter{tocdepth}{1}
	\toccontents      	
	\newgeometry{margin=1.5545in}

	\section{Introduction}
 	The objective of this work is to show that deterministic controllability conditions can be used to infer ergodic properties of stochastic non-smooth constrained dynamics governed by differential inclusions. Such random dynamical systems (RDS), which are ubiquitous in science and engineering, are often described by variational inequalities and their study is obstructed by irregular coefficients (\cf~Chapter 4 in \cite{PardouxRascanu-2014}). Over many decades, several notions of controllability have been developed to measure the ability of dynamical systems to transition between prescribed states under certain degrees~of freedom (the controls). Here, classical controllability properties of deterministic systems will be exploited for the investigation of constrained stochastic systems that arise when substituting the controls by noise. On an abstract level, we thereby demonstrate that controllability techniques can be used to tackle regularity issues commonly associated with the study of constrained problems. To illustrate the interest of our work from a practical perspective, we consider systems under hysteresis (finite dimensional elasto-plasticity) with random forcing.
 
	\subsection{Review on stochastic elasto-plastic models}

   	Materials often react to forces exerted upon them through elastic (reversible) or plastic (permanent) deformations. As we commonly observe, bending and then straightening a metallic object, like a wire or spoon, induces a permanent deformation, a plastic effect, at the initial bend. For many materials in nature, alternating stresses or deformations typically affect the material's local properties; \eg, the maximum stress (yield strength) tolerated before plastic deformation results in state (integrity and strength) degradation. These are examples of the "Bauschinger effect", which can complicate the risk failure analysis of mechanical structures under vibrational forces. This has significant relevance in earthquake engineering, made even more challenging due to the random nature of seismic forces.

    Much of the work on stochastic elasto-plasticity was done by engineers \cite{RobertsSpanos-1990}, with first strides made in the 1960s \cite{KarnoppScharton-1966}, but using mainly heuristical approaches. Mathematically, the field is still in its infancy, thereby providing rich questions and motivations for the development of a rigorous theory. Stochastic variational inequalities (SVIs) have been identified as a solid mathematical framework \cite{BensoussanTuri-2006} for describing the dynamics of various elasto-plastic systems such as white noise driven elasto-plastic oscillators. The existence and uniqueness of invariant probability measures for SVIs modelling white noise driven elasto-plastic oscillators has been shown in \cite{BensoussanTuri-2008}. The proof consists of extending Khasminski\u{\i}'s method \cite{Khasminskii-1980}, which leads to the study of degenerate elliptic problems with non-local Dirichlet boundary conditions. 
           
    The existence of a unique invariant measure is useful in engineering, \eg, when evaluating statistics of plastic deformations at large times. Another application of interest is to study the frequency of occurrence of plastic deformations \cite{MertzFeau-2012}; see also \cite{BensoussanMertzYam-2012,BensoussanMertz-2012}. Large time statistics of plastic deformations contain crucial information for risk analysis of failure. Since closed formul\ae~are not available, a numerical approximation of the invariant measure by a deterministic algorithm has been proposed in \cite{BensoussanMertzPironneauTuri-2009}. The latter is based on a class of partial differential equations defining the invariant measure via duality, as introduced in \cite{BensoussanTuri-2010}. 

    \subsection{Review on controllability methods}\label{subsection:overview}

	In the literature, the ergodicity of stochastic differential equations has been predominantly studied when the vector fields entering the equation are smooth and the driving noise is white. Typically, it is assumed that the system possesses a Lyapunov function, the coefficients are sufficiently smooth, and the H\"ormander condition is satisfied everywhere in the phase space. Then the transition function of the solution process has a smooth and almost surely positive density relative to the Lebesgue measure. As a result, the process is strong Feller and irreducible, leading, by virtue of Doob's theorem, to the existence of a unique stationary measure (see \cite{MT-93, Khasminskii-1980,DaPratoZabczyk-1996}).  
		
	Controllability-type arguments allow to considerably relax the assumptions on both the deterministic and stochastic counterparts of the system. Arnold and Kliemann~\cite{AK-87} are one of the firsts to explicitly use controllability terminology and methods to establish the uniqueness of stationary measures for degenerate diffusion equations. They assume that H\"ormander's condition is satisfied at one point and the process is irreducible; their proof extensively employs the Gaussian structure of the noise.  In a more refined application of control theory, Shirikyan~\cite{Sh-17} uses a coupling method to prove exponential mixing in the total variation metric for RDS on compact Riemannian manifolds. His approach relies on the solid controllability property from a point and the global approximate controllability to that point; the noise is assumed to satisfy a decomposability condition and is not restricted to being Gaussian. Extensions of this result to non-compact phase spaces were provided by Raqu\'epas~\cite{Raq-19} in the case of degenerate white noise and by Raqu\'epas and Nersesyan~\cite{NR-21} in the case of degenerate Poisson noise. The employed controllability approaches are quite flexible and admit infinite-dimensional generalizations as well; for instance, Kuksin et al.~\cite{KNS} develop a controllability method for studying the ergodicity of the Navier--Stokes system driven by a bounded noise.
		  	  
	In the current paper, motivated by applications to elasto-plastic models driven by random forces, we further develop the abstract criteria for exponential mixing provided in~\cite{Sh-17}. The main novelty of the version presented here is that the underlying deterministic system is allowed to be non-smooth. Specifically, the differential inclusion modeling elasto-plasticity leads to a lack of time-differentiability at certain instances of time, and the drift of the system is assumed to be differentiable only near an interior point~$p$. 
	To address these difficulties, inspired by the techniques of~\cite{Sh-17},  we assume that the system is solidly controllable from the point~$p$, and the dynamics is smooth in a neighborhood of $p$ which is accessible from everywhere in the phase space. We verify these assumptions for the example of an elasto-plastic system driven by white noise or general decomposable noise.

 	\subsection{Overview of the manuscript}
	
 	Let us denote $\xM \coloneq \mathbb{R} \times [-1,1]$ and assume that $f\colon \xM \longrightarrow \mathbb{R}$ is a locally Lipschitz function. We consider a class of non-smooth dynamical systems of the form
 	
	\begin{equation}\label{equation:Application1_Model0.1}
			\dot{y} = f(y,z) + \zeta, \quad
			y \in \dot{z} + \partial g(z),
	\end{equation}
	where $g\colon \mathbb{R}\longrightarrow \mathbb{R} \cup \{+\infty\}$ is the characteristic function (in the sense of convex analysis) of the interval~$[-1,1]$ and~$\partial g$ is its subdifferential. Our present work covers a broad class of noise $\zeta(\cdot)$; but to fix the ideas, let us assume in this introduction that~$\zeta$ is a white noise of the form $\zeta(t) = \dot{\beta} (t)$, where $\beta(t)$ is a standard Brownian motion. Furthermore, the system \eqref{equation:Application1_Model0.1} is supplemented with the initial condition
	\begin{equation}\label{equation:Application1_Model_initialcondition0.1}
		(y, z)(0) = (y_0,z_0) \in  \xM.
	\end{equation}
	To have globally well-defined and dissipative dynamics, we assume that the following Lyapunov-type condition is satisfied for the drift
		  	   
	\begin{equation}\label{equation:Lyapunov}
					y f(y,z) \leq - \alpha y^2 + C
	\end{equation}
	with some constants~$\alpha,C > 0$ and any $(y,z) \in \xM$. Under the above conditions, we establish the following result.
	
	\begin{thrmB}If~$f$ is smooth (infinitely differentiable) in a neighborhood of some interior point $p \in \mathbb{R} \times (-1,1)$,
	then the Markov process associated with the problem \eqref{equation:Application1_Model0.1}, \eqref{equation:Application1_Model_initialcondition0.1} has a unique stationary measure that is exponentially mixing.
	\end{thrmB}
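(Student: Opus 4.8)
The plan is to derive the statement from an abstract exponential-mixing criterion of Shirikyan type, thereby reducing the problem to verifying four ingredients for the time-$T$ Markov chain $(y_k,z_k)_{k\ge 0}$ obtained by sampling the solution of \eqref{equation:Application1_Model0.1}--\eqref{equation:Application1_Model_initialcondition0.1} at integer multiples of a fixed horizon $T>0$: a dissipativity (Lyapunov) estimate, global approximate controllability to the interior point $p$, solid controllability from $p$, and decomposability of the driving noise. First I would record well-posedness of the stochastic variational inequality: for $z\in(-1,1)$ one has $\partial g(z)=\{0\}$ and the plastic component obeys $\dot z=y$, while at $z=\pm1$ the subdifferential activates and keeps $z$ inside $[-1,1]$; this defines a Feller Markov process on $\xM$. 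For the Lyapunov step I take $V(y,z)=1+y^2$ (recall $z$ is automatically confined to $[-1,1]$) and combine \eqref{equation:Lyapunov} with the It\^o correction of the white noise to obtain $\frac{\mathrm d}{\mathrm d t}\mathbb E[y^2]\le -2\alpha\,\mathbb E[y^2]+2C+1$, so that the chain returns exponentially fast to a bounded subset of $\xM$.

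Next I would treat the two controllability conditions, replacing $\zeta$ by a deterministic control $u$. In the interior $\{z\in(-1,1)\}$ the system reduces to the control-affine system $\dot y=f(y,z)+u$, $\dot z=y$. For global approximate controllability to $p=(y_p,z_p)$ with $z_p\in(-1,1)$, I first steer $y$ to the appropriate sign so as to drive $z$ towards $z_p$ (leaving the boundary $z=\pm1$ whenever required), and then tune $u$ to bring $(y,z)$ into any prescribed neighborhood of $p$; only approximate reachability is needed here, and this is robust to the non-smoothness of $f$ away from $p$ and to the set-valued right-hand side. For solid controllability I would work entirely inside the neighborhood of $p$ where $f$ is smooth and $z$ remains in $(-1,1)$, so that the constraint is inactive and the dynamics is genuinely smooth. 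Writing $X_0=f\,\partial_y+y\,\partial_z$ and $X_1=\partial_y$, the bracket $[X_1,X_0]=f_y\,\partial_y+\partial_z$ shows that $X_1$ and $[X_1,X_0]$ span the tangent space at $p$; hence the end-point map of this double-integrator-type system is a submersion, which upgrades to solid controllability from $p$ by a standard degree-theoretic / implicit-function argument.

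It then remains to check decomposability of the noise. The law of the Brownian path on $[0,T]$ splits, as in the white-noise setting of \cite{Raq-19}, into a finite-dimensional low-frequency part whose density is continuous and bounded below on bounded sets, plus an independent remainder; this is exactly the decomposition needed to run the coupling construction. Feeding the four verified properties into the abstract criterion yields a unique stationary measure that is exponentially mixing in the dual-Lipschitz metric, which is the assertion of the theorem.

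The main obstacle is the non-smoothness: the differential inclusion destroys time-differentiability at boundary hits, and $f$ is differentiable only near $p$, so the submersion and degree arguments underlying solid controllability cannot be invoked globally. The resolution, and the conceptual crux, is to confine the quantitative (solid) controllability to the smooth interior neighborhood of $p$, where both the constraint is inactive and $f$ is regular, while demanding from the remainder of $\xM$ only the much weaker approximate controllability, which tolerates the irregular drift and the constraint. The delicate point is to make the interface between these two regimes compatible with the hypotheses of the abstract coupling theorem, so that the locally constructed solid controllability and the globally available approximate controllability can be chained together.
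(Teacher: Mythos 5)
Your proposal follows essentially the same route as the paper: reduction to the discrete-time chain $x_k = S(x_{k-1};\eta_k)$ at multiples of a suitably small horizon $T_0$, the Lyapunov bound with $V=1+y^2$ via It\^o's formula, explicit approximate controllability to the interior point $p$, solid controllability from $p$ obtained by confining the argument to the smooth neighborhood where the constraint is inactive and upgrading surjectivity of the linearized endpoint map (which the paper checks by an explicit control rather than a bracket computation, but to the same effect) through an inverse-function/degree argument, and decomposability of the Wiener law in the integrated trigonometric basis \`a la Raqu\'epas --- all fed into a Shirikyan-type abstract criterion. Two minor discrepancies: the coupling construction actually yields mixing in the \emph{total variation} metric, which is how exponential mixing is defined in the paper (dual-Lipschitz would be a weaker conclusion than the theorem asserts), and the paper includes a short final step transferring the discrete-time estimate to the continuous-time semigroup $\mathfrak{P}_t^*$ by means of the Lyapunov inequality, which your outline stops short of.
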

	
	To outline the idea of the proof of this theorem, let us first note that the problem can be reduced to the study of a discrete-time RDS in the following way. Let us fix a time $T_0 > 0$ and let~$\xE$ be the Banach space $\xC_0([0,T_0];\mathbb{R})$ of continuous functions $\eta \colon [0,T_0] \longrightarrow \mathbb{R}$ with $\eta(0) = 0$, endowed with the uniform norm. Furthermore, let us denote by $S$ the resolving mapping of our problem:
	\[
		S\colon \xM\times \xE\longrightarrow \xM, \quad  (x_0, \eta)\mapsto x(T_0),   
	\]
	where $x(t)\coloneq (y(t),z(t))$ solves \eqref{equation:Application1_Model0.1}, \eqref{equation:Application1_Model_initialcondition0.1} with the initial state $x_0 =(y_0,z_0)$ and force $\zeta \coloneq ``\dot{\eta}"$. We define a family of independent identically distributed~(i.i.d) random variables $\{\eta_k\}_{ k \geq 1}$ in $\xE$ through
	\[
		\eta_k(t)\coloneq  \beta((k-1)T_0 +t)-\beta((k-1)T_0), \quad t\in [0,T_0]
	\] 
	and note that the discrete-time process $x_k\coloneq x(kT_0)$, obtained by restricting our original process at integer times, satisfies the relations
	\begin{equation}\label{equation:System0.1}
		x_k = S(x_{k-1};\eta_k).
	\end{equation}
	In this way, the problem of mixing for \eqref{equation:Application1_Model0.1}, \eqref{equation:Application1_Model_initialcondition0.1} is transformed to a similar problem for the discrete-time RDS \eqref{equation:System0.1}.
	The exponential mixing for the latter is derived from the following four properties by appropriately extending the abstract theory developed in~\cite{Sh-17}:
	\begin{enumerate}
		\item[$\bullet$] Lyapunov structure;
		\item[$\bullet$] approximate controllability to an interior point $p$;
		\item[$\bullet$] smoothness of $S$ near $p$ and solid controllability from $p$,
		\item[$\bullet$] decomposability of the law of $\eta_k$.
	\end{enumerate}
	The Lyapunov property follows from the assumption \eqref{equation:Lyapunov}. Approximate controllability is established essentially by using explicit formulas for the control. The regularity and solid controllability are obtained by choosing $T_0$ sufficiently small and observing that the problem is exactly controllable with a control function that depends continuously on the target. Decomposability of the law of the Brownian motion is shown to hold with respect to an increasing family of finite-dimensional subspaces formed by trigonometric functions. Finally, the exponential mixing at discrete times, combined with the above Lyapunov-type assumption, implies the exponential mixing for the continuous-time process.

 	\subsection*{Organization}
 	The goal of \Cref{section:main} is to develop general conditions that are sufficient for exponential mixing; the abstract main result of this article is stated in \Cref{subsection:mainresult} and proved in \Cref{subsection:proof_mainresult}. \Cref{section:elastoplasticity} showcases~concrete applications in finite-dimensional elasto-plasticity. Finally, exponential recurrence estimates and a measure transformation theorem are collected in Appendices~\Rref{section:appendix_exponentialrecurrence} and~\Rref{section:appendix_measuretransformation}.
	
	\subsection*{Notations}
	Let $(\xX, d)$ be a complete separable metric space and $\xB_{\xX}(a, R)$ the open ball in $(\xX, d)$ of radius~$R > 0$ centered at~$a \in \xX$. Moreover, the $\xX$-closure of $\xA \subset \xX$ is indicated by $\operatorname{clos}_{\xX} \xA$, the Borel~$\sigma$-algebra on $(\xX, d)$ is written as~$\mathcal{B}(\xX)$, and the Banach space $\xLinfty(\xX;\RR)$ of essentially bounded $\mathcal{B}(\xX)$-measurable functions $f\colon \xX\longrightarrow \mathbb{R}$ is endowed with the uniform norm
	\[
		\|f\|_\infty \coloneq \esssup_{x\in \xX} |f(x)|, \quad f \in \xLinfty(\xX;\RR).
	\]
	Further, the Borel probability measures $\mathcal{P}(\xX)$ on $\xX$ are equipped with the total variation distance
	\begin{equation}\label{equation:var}
		\begin{aligned}
			\|\mu_1-\mu_2\|_{\operatorname{var}} & \coloneq \sup_{\Gamma\in \mathcal{B}(\xX)} | \mu_1(\Gamma)- \mu_2(\Gamma)|\\
			& \, =
			\frac{1}{2}\sup_{\|f\|_\infty\leq 1}|\langle f,\mu_1\rangle_X-\langle f,\mu_2\rangle_{\xX}|, \quad \mu_1, \mu_2\in \mathcal{P}(\xX),
		\end{aligned}
	\end{equation}
	where the bracket
	\[
		\langle f, \mu \rangle_{\xX} \coloneq \int_{\xX} f(x)\,\mu(\xdx{x})
	\]
	is well-defined, in particular, for all $f\in \xLinfty(\xX;\RR)$ and $\mu \in \mathcal{P}(\xX)$. Regarding the basic property expressed by the second equation of \eqref{equation:var}, see, for instance, \cite[Exercise 1.2.10]{KS-12}. Furthermore, when $\mu_1$ and $\mu_2$ are absolutely continuous with respect to a measure $\mu \in \mathcal{P}(\xX)$, with densities $\rho_1$ and $\rho_2$, one has (\cf~\cite[Proposition 1.2.7]{KS-12})
	\begin{equation}\label{equation:varnormdensityidentiy}
		\|\mu_1-\mu_2\|_{\operatorname{var}} = 1 - \int_{\xX} \min\{\rho_1(x), \rho_2(x)\} \, \mu(\xdx{x}).
	\end{equation}
    Throughout, the symbol $C$ refers to unessential positive constants that may change during the estimates, and we~abbreviate
	\[
		\mathbb{R}_+ \coloneq [0,\infty), \quad \mathbb{R}_* \coloneq \mathbb{R}\setminus\{0\}, \quad \mathbb{N}_0 \coloneq \mathbb{N} \cup \{0\}.
	\]

	\section{Exponential mixing via controllability}\label{section:main} 
	This section presents sufficient conditions for the exponential mixing of a discrete-time~RDS on a connected smooth Riemannian manifold~$\xM$ with or without boundary.
	The plan is as follows.
	\begin{itemize}
		\item[$\triangleright$] \Cref{subsection:fourconditions}. Four conditions for exponential mixing are introduced.
		\item[$\triangleright$] \Cref{subsection:mainresult}. The abstract main result (\Cref{theorem:main}) is presented.
		\item[$\triangleright$]
		 \Cref{subsection:proof_mainresult}. \Cref{theorem:main} is proved.
	\end{itemize}

	\subsection{Abstract framework}\label{subsection:fourconditions}
	Let~$d_{\xM}\colon \xM\times \xM \longrightarrow \mathbb{R}_+$ be the natural Riemannian distance and suppose that~$(\xM,d_{\xM})$ forms a complete metric space. Given a separable Banach space $\xE$ (\enquote{noise space}), a continuous mapping
	\[
		S \colon \xM \times \xE \longrightarrow \xM,
	\]
	and a sequence $\{\eta_k\}_{k\in\mathbb{N}}$ of i.i.d. random variables in~$\xE$, we consider the RDS defined via
	\begin{equation}\label{equation:generalRDS}
		x_k = S(x_{k-1}; \eta_k), \quad k \geq  1.
	\end{equation}
	This RDS gives rise to a Markov family $\{x_k, \mathbb{P}_x\}_{k \in \mathbb{N}_0}$ and the associated transitions~$\{P_k(x,\Gamma)\}_{k \in \mathbb{N}_0}$ parametrized by the initial condition~$x_0 = x \in \xM$; the corresponding Markov semigroups are for $k \geq 0$ given by
	\begin{align*}
		\mathfrak{P}_k\colon \xLinfty(\xM) & \longrightarrow \xLinfty(\xM), \quad \mathfrak{P}_k f (x) \coloneq \int_{\xM} f(y) P_k(x, \xdx{y}), \\
		\mathfrak{P}_k^*\colon\mathcal{P}(\xM) & \longrightarrow \mathcal{P}(\xM), \quad \mathfrak{P}_k^* \mu(\Gamma) \coloneq \int_{\xM} P_k(x,\Gamma) \mu(\xdx{x}).
	\end{align*}
	\begin{dfntn}
		A measure $\mu \in \mathcal{P}(\xM)$ is called stationary for the family $\{x_k, \mathbb{P}_x\}_{k \in \mathbb{N}_0}$ provided that~$\mathfrak{P}_1^*\mu = \mu$.
	\end{dfntn}

	\begin{dfntn}\label{definition:Lyapunov}
		A Lyapunov function for the RDS \eqref{equation:generalRDS} is understood as any continuous functional~$V\colon \xM\longrightarrow [1,+\infty)$ having compact level sets (that is,~$\{V(x) \leq R\} \ssubset \xM$ for all~$R$) such that there are~$q \in (0,1)$ and $A > 0$ with
		\begin{equation}\label{equation:abstractLyapunov}
			\mathbb{E}_x  V(x_1) \leq q V(x) + A
		\end{equation}
		for all $x \in \xM$. Here, the expectation with respect to $\mathbb{P}_x$ is denoted by $\mathbb{E}_x$; the state $x_1$ is obtained from the initial data $x \in \xM$ via \eqref{equation:generalRDS}.
	\end{dfntn}
	
	\begin{dfntn}\label{definition:exponentialmixing}
		A stationary measure $\mu \in \mathcal{P}(\xM)$ for the family $\{x_k, \mathbb{P}_x\}_{k \in \mathbb{N}_0}$ is exponentially mixing if there are constants $\gamma > 0$ and $C>0$, and a Lyapunov function~$V$ satisfying
		\begin{equation}\label{equation:definition_exponentialmixing}
			\|\mathfrak{P}_k^* \lambda -\mu\|_{\operatorname{var}} \leq C\operatorname{e}^{-\gamma k}  \langle V, \lambda \rangle_{\xM}, \quad k  \geq 1
		\end{equation}
	for any $\lambda\in \mathcal{P}(\xM)$ with $\langle V, \lambda \rangle_{\xM}< +\infty$.
	\end{dfntn}
	
	\subsubsection{Conditions}\label{subsubsection:conditions}
	We describe below four conditions on the mapping~$S$ and the sequence~$\{\eta_k\}_{k\in\mathbb{N}}$ introduced above; as stated in \Cref{theorem:main}, they shall guarantee the existence and exponential mixing of a stationary measure for the Markov family $\{x_k, \mathbb{P}_x\}_{k \in \mathbb{N}_0}$.
	
	\begin{cndtn}[Lyapunov structure]\label{condition:C1}
		There exists a Lyapunov function for \eqref{equation:generalRDS} in the sense of \Cref{definition:Lyapunov}.
	\end{cndtn}
	
	\begin{rmrk}
		As a consequence of \Cref{condition:C1}, the standard Bogolyubov--Krylov argument yields the existence of at least one stationary measure $\mu \in \mathcal{P}(\xM)$ for the family $\{x_k, \mathbb{P}_x\}_{k \in \mathbb{N}_0}$, and~$\langle V,\mu \rangle_{\xM} < +\infty$ follows via Fatou's lemma; \cf~Section~2.5 in~\cite{KS-12}.
	\end{rmrk}

	The uniqueness of a stationary measure essentially follows from controllability properties of the deterministic system described by $S$ when viewing~$\{\eta_k\}_{k \geq 1}$ in \eqref{equation:generalRDS} as controls. As mentioned in the introduction, we deviate from the abstract theory in \cite{Sh-17} by allowing also manifolds with boundary (the constraint) and non-smoothness of the map~$S$; to this end, we utilize controllability to an interior point $p$ near which~the system is smooth. In what follows, we write for simplicity~$S_2(x; \zeta_1, \zeta_2)$ instead of~$S(S(x;\zeta_1); \zeta_2)$ and analogously define $S_n(x; \zeta_1,\ldots,  \zeta_n)$ for any $n \geq 1$.

	\begin{cndtn}[Approximate controllability]\label{condition:C2}
	There is an interior point $p \in \xM \setminus \partial \xM$ such that for any
		\begin{itemize}
			\item accuracy parameter $\delta > 0$,
			\item radius $R>0$,
			\item initial state $x \in \operatorname{clos}_{\xM} \xB_{\xM}(p, R)$,
		\end{itemize} 
		there exist a \enquote{time} $n = n(R, \delta) \in \mathbb{N}$ and controls $\{\zeta_i\}_{i\in\{1,\dots,n\}} \subset \xE$ satisfying
		\begin{equation*}
				S_n(x;\zeta_1, \dots,\zeta_n)\in\xB_{\xM}(p, \delta). 
		\end{equation*}
	\end{cndtn}

	\begin{rmrk}
		We will utilize Conditions~\Rref{condition:C1} and \Rref{condition:C2} to show the exponential recurrence property detailed in \Cref{section:appendix_exponentialrecurrence}. 
	\end{rmrk}

	We further resort to the notion of solid controllability introduced by Agrachev and Sarychev in \cite{AS-05} for the regulation of incompressible fluids by means of low mode forcing. Solid controllability is a type of (local) exact controllability that is robust under small perturbations. It has already proved to be useful in a regular setting when studying ergodicity properties of RDS associated with various models; e.g., see \cite{AKSS-07, Sh-07, Sh-17, Raq-19, NR-21}.

	\begin{cndtn}[Solid controllability]\label{condition:C3}
		Let $p \in \xM \setminus \partial \xM$ be as in \Cref{condition:C2}. There are   
		numbers $\epsilon, \delta > 0$, non-empty open balls
		$\widetilde{\xB}_{\xM}, \widehat{\xB}_{\xM} \subset \xM \setminus \partial \xM$ and $\widetilde{\xB}_{\xE} \subset \xE$, and a compact set $\mathcal{K}_{\xE} \subset \widetilde{\xB}_{\xE}$ such that the following properties hold.
		
		\begin{enumerate}[(i)]
			\item {\it (Regularity).} Given any $x\in \operatorname{clos}_{\xM}\xB_{\xM}(p, \delta)$, one has $S(x;\widetilde{\xB}_{\xE}) \subset \widetilde{\xB}_{\xM}$, the mapping $S(x;\cdot)\colon\widetilde{\xB}_{\xE} \longrightarrow\widetilde{\xB}_{\xM}$ is Fr\'echet differentiable with continuous derivative~$(x,\zeta) \mapsto D_{\zeta}S(x,\zeta)$ on~$\xB_{\xM}(p, \delta)\times \widetilde{\xB}_{\xE}$, and~$S(p;\cdot)\colon \widetilde{\xB}_{\xE} \longrightarrow \widetilde{\xB}_{\xM}$ is infinitely Fr\'echet differentiable.
			
			\item {\it (Solid controllability).} For any continuous function $\Phi\colon \mathcal{K}_{\xE} \longrightarrow \xM$, one has the implication
			\[
				\sup_{\zeta \in \mathcal{K}_{\xE}} d_{\xM}(\Phi(\zeta),S(p;\zeta)) \leq \epsilon \quad \Longrightarrow \quad \widehat{\xB}_{\xM} \subset \Phi(\mathcal{K}_{\xE}).
			\]
		\end{enumerate}
	\end{cndtn}
	
	The next condition states that the noise in \eqref{equation:generalRDS} is decomposable. To make this more precise, let~$\ell$ be the law of~$\eta_1$ from \eqref{equation:generalRDS}. Furthermore, given any complemented subspace $\xF \subset \xE$, the projection onto $\xF$ in $\xE$ is denoted by~$\mathsf{P}_{\xF}$ and the image of $\ell$ under $\mathsf{P}_{\xF}$ is written as $(\mathsf{P}_{\xF})_* \ell \coloneq \ell \circ \mathsf{P}_{\xF}^{-1}$.

	\begin{cndtn}[Decomposability]\label{condition:C4}
		Let $\mathcal{K}_{\xE}\subset \xE $ be as in \Cref{condition:C3}.
		There exists an increasing sequence of finite-dimensional subspaces $\{\xF_j\}_{j \in \mathbb{N}} \subset \xE$ with respective complements $\{\xH_j\}_{j \in \mathbb{N}}$ in~$\xE$ such that
		\begin{itemize}
			\item[(a)] $\cup_{j=1}^{+\infty} \xF_j$ is dense in $\xE$,
			\item[(b)] $\ell = [(\mathsf{P}_{\xF_j})_*\ell] \otimes [(\mathsf{P}_{\xH_j})_*\ell]$ for any $j \in \mathbb{N}$,
			\item[(c)] $(\mathsf{P}_{\xF_j})_*\ell$ possesses for each $j \in \mathbb{N}$ a positive continuous density with respect to the Lebesgue measure on $\xF_j$,
			\item[(d)] $\lim_{j\to+\infty}\mathsf{P}_{\xF_j}\zeta = \zeta$ uniformly with respect to $\zeta\in \mathcal{K}_{\xE}$, that is
			\[
				\sup \limits_{\zeta \in \mathcal{K}_{\xE}} \| \mathsf{P}_{\xF_j} \zeta - \zeta \|_{\xE} \to 0, \: \mbox{ as } \: j \to \infty.
			\]
 		\end{itemize}	
	\end{cndtn}

	\begin{rmrk}
		Conditions~\Rref{condition:C3} and \Rref{condition:C4} enable the application of a measure transformation theorem from~\cite{Sh-07} that provides a uniform estimate for the transition function~$P_1(x,\cdot)$ when $x$ is close to the point~$p$ (\cf~the proof of \Cref{lemma:transition_bound}). 
	\end{rmrk}

	\begin{xmpl}\label{Example:ONBRepCondition4}
		In addition to the case of white noise introduced in \Cref{subsection:overview}, we can treat general decomposable noise. Namely, when $\xE$ is a separable Hilbert space, \Cref{condition:C4} is verified for the law~$\ell$ of any random variable that admits a representation
		\begin{equation}\label{equation:etarep}
			\eta = \sum_{j=1}^\infty b_j\xi_j e_j,
		\end{equation}
		where
		\begin{itemize}
			\item $\{b_j\}_{j\in\mathbb{N}} \subset \mathbb{R}_*$ are square-summable,
			\item  $\{\xi_{j}\}_{j\in\mathbb{N}}$ are independent scalar random variables having a common positive continuous density~$\rho$ with respect to the Lebesgue measure,
			\item $\{e_j\}_{j\in\mathbb{N}}$ form an orthonormal basis in~$\xE$.
		\end{itemize}
		
        In fact, assuming \eqref{equation:etarep}, even a stronger version of (d) in \Cref{condition:C4} holds; namely, the operator sequence~$\{\mathsf{P}_{\xF_j}\}_{j \in \mathbb{N}}$ converges to the identity in the norm topology, that is
        \[
        	\sup \limits_{\| \zeta \|_{\xE} \: \leq \: 1} \| \mathsf{P}_{\xF_j} \zeta - \zeta \|_{\xE} \to 0, \: \mbox{ as } \: j \to \infty.
        \]
	\end{xmpl}
        A concrete example of a RDS satisfying the four conditions above is given in \Cref{section:elastoplasticity}. There, we will also discuss several common types of noise that satisfy Condition 4.
        
	\subsection{Main result}\label{subsection:mainresult}
	
	The core result of this section allows to conclude exponential mixing, and thus ergodicity, from the general conditions listed in \Cref{subsubsection:conditions}; it can be stated in the following way.
	
	\begin{thrm}\label{theorem:main}
		Under the Conditions~\Rref{condition:C1}--\Rref{condition:C4}, the family $\{x_k, \mathbb{P}_x\}_{k \in \mathbb{N}_0}$ has a unique stationary measure $\mu \in \mathcal{P}(\xM)$ that is exponentially mixing in the sense of \Cref{definition:exponentialmixing}.
	\end{thrm}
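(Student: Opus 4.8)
The plan is to follow the coupling strategy of \cite{Sh-17}, adapted to the non-smooth setting by confining all delicate arguments to a small neighborhood of the interior point~$p$, where \Cref{condition:C3} guarantees regularity. The existence of at least one stationary measure $\mu \in \mathcal{P}(\xM)$ with $\langle V, \mu\rangle_{\xM} < +\infty$ is already furnished by the Bogolyubov--Krylov argument under \Cref{condition:C1}, so the task reduces to proving the mixing estimate \eqref{equation:definition_exponentialmixing}; uniqueness is then automatic, since any two stationary measures must coincide once mixing toward one of them is established.

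First I would establish \emph{exponential recurrence} to a small ball $\xB_{\xM}(p,d_*)$ around~$p$. Combining the dissipativity encoded in the Lyapunov inequality \eqref{equation:abstractLyapunov} (\Cref{condition:C1}) with the approximate controllability of \Cref{condition:C2}, one shows that from any compact level set $\{V \leq R\}$ the process reaches $\xB_{\xM}(p,d_*)$ in a bounded number of steps with probability bounded below, while \eqref{equation:abstractLyapunov} drives the process into such a level set. This yields an exponential moment bound on the first hitting time $\tau_* \coloneq \inf\{k \geq 1 : x_k \in \xB_{\xM}(p,d_*)\}$ of the form $\mathbb{E}_x \operatorname{e}^{\beta \tau_*} \leq C V(x)$ for some $\beta > 0$; this is the content of \Cref{section:appendix_exponentialrecurrence}.

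Next I would prove a \emph{local one-step minorization} near~$p$. Using the regularity and solid controllability of \Cref{condition:C3} together with the decomposability of the noise in \Cref{condition:C4}, the measure transformation theorem of \cite{Sh-07} (see \Cref{section:appendix_measuretransformation}) provides $\kappa \in (0,1)$ such that $\| P_1(x,\cdot) - P_1(x',\cdot)\|_{\operatorname{var}} \leq 1 - \kappa$ for all $x, x' \in \operatorname{clos}_{\xM}\xB_{\xM}(p,\delta)$; equivalently, the transition kernels started near~$p$ admit a common lower bound $\kappa\, m$ for some reference measure $m$ supported in $\widehat{\xB}_{\xM}$. This is exactly where smoothness of~$S$ is needed and available, since the estimate is confined to the neighborhood of~$p$; \cf~\Cref{lemma:transition_bound}.

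Finally I would assemble these ingredients into a coupling of two copies of the RDS, one issued from an arbitrary $\lambda \in \mathcal{P}(\xM)$ with $\langle V, \lambda\rangle_{\xM} < +\infty$ and one from the stationary $\mu$. The trajectories are evolved jointly; each time both simultaneously lie in $\xB_{\xM}(p,\delta)$, the minorization lets them coincide with probability at least $\kappa$, and otherwise they are continued so that the recurrence estimate applies afresh from the current positions. The coupling time $\tau$ is thereby dominated by a geometric number of excursions, each of exponentially controlled length, which, together with the Lyapunov weight surviving the failed attempts, gives $\mathbb{P}\{\tau > k\} \leq C \operatorname{e}^{-\gamma k}\langle V, \lambda\rangle_{\xM}$ and hence $\|\mathfrak{P}_k^*\lambda - \mu\|_{\operatorname{var}} \leq \mathbb{P}\{\tau > k\}$, which is \eqref{equation:definition_exponentialmixing}. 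The main obstacle I anticipate is the interface between the minorization and the coupling: since the former holds only on the small ball around~$p$, one must ensure that after each failed coupling the two components re-enter this ball \emph{simultaneously} with exponentially controlled delay and without losing the Lyapunov control on the right-hand side, which requires running the recurrence estimate from generic (non-smooth) positions and carefully tracking the weight~$V$ through the geometric series of attempts.
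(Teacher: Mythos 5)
Your proposal is correct and follows essentially the same route as the paper: existence via Bogolyubov--Krylov, exponential recurrence of the (pair of) trajectories to a small ball around $p$ via Conditions~\Rref{condition:C1}--\Rref{condition:C2} (\Cref{lemma:recurrence2}), a one-step total-variation contraction on that ball via the measure transformation theorem under Conditions~\Rref{condition:C3}--\Rref{condition:C4} (\Cref{lemma:transition_bound}), and a maximal-coupling construction whose coupling time has finite exponential moment. The ``interface'' difficulty you flag at the end is precisely what the paper resolves through \Cref{lemma:recurrence2} together with the H\"older-inequality bookkeeping in the proof of \Cref{lemma:random_time}.
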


	The main novelty of \Cref{theorem:main} is that the underlying deterministic system is allowed to be non-smooth in several ways. First, the differential inclusion modeling elasto-plasticity induces irregularities (lack of differentiability of the time-dependent solution at some particular times). Second, the drift of the system is assumed to be differentiable only near an interior point~$p \in \xM \setminus \partial \xM$. To accommodate these difficulties, we exploit controllability techniques. Namely, since the interior point~$p$ in~\Cref{condition:C3} -- from which the RDS is solidly controllable -- can be reached from everywhere via \Cref{condition:C2}, smoothness of $S$ will be required only near $p$. Moreover, we provide a unified framework for treating white and general decomposable noise. 

	\subsection{Proof of the main result}\label{subsection:proof_mainresult}
	
	Inspired by the approach developed in \cite{Sh-17}, the proof of \Cref{theorem:main} shall involve a combination of controllability and coupling arguments. The argument is structured as follows.
	\begin{itemize}
		\item (\Cref{subsection:tf}). An estimate for the transition function $P_1$ is proved.
		\item (\Cref{subsubsection:cs}). Coupling arguments yield an auxiliary process with improved pathwise properties that has the same law as the original one.
		\item (\Cref{subsubsection:completion_proof_mainresult}). The proof is concluded.
	\end{itemize}
	
	\subsubsection{Transition function near $p$}\label{subsection:tf}
	
	The solid controllability and decomposability properties stated in Conditions~\Rref{condition:C3} and~\Rref{condition:C4} yield the subsequent bound for the transition function $P_1$. 
	\begin{lmm}\label{lemma:transition_bound}
		There are numbers  $\widehat{\delta} > 0$  and $r \in (0,1)$ such that
		\begin{equation}\label{equation:transitionbound}
			\left\| P_1(x, \,\cdot\,)  -  P_1(x', \,\cdot\,) \right\|_{\operatorname{var}} \leq r
		\end{equation}
		for any  $x,x'\in \operatorname{clos}_{\xM}\xB_\xM(p,\widehat{\delta})$.
	\end{lmm}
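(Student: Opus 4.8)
The plan is to reduce \Cref{lemma:transition_bound} to a uniform \emph{minorization} (local Doeblin) estimate near~$p$. Concretely, I will produce a radius~$\widehat\delta>0$, a constant~$c_0\in(0,1]$, and a fixed nonzero measure~$m$ on~$\xM$ with~$m(\xM)=c_0$ such that
\[
	P_1(x,\,\cdot\,)\geq m \quad\text{for every } x\in\operatorname{clos}_{\xM}\xB_\xM(p,\widehat\delta).
\]
Granting this, for any two such~$x,x'$ one writes~$P_1(x,\,\cdot\,)=m+\widetilde\mu_x$ with~$\widetilde\mu_x\geq 0$ and~$\widetilde\mu_x(\xM)=1-c_0$; then for every Borel set~$\Gamma$ the common part~$m$ cancels, so that~$|P_1(x,\Gamma)-P_1(x',\Gamma)|=|\widetilde\mu_x(\Gamma)-\widetilde\mu_{x'}(\Gamma)|\leq 1-c_0$, whence \eqref{equation:transitionbound} holds with~$r\coloneq 1-c_0\in(0,1)$. (Equivalently, one may invoke the overlap identity \eqref{equation:varnormdensityidentiy}.)

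To build~$m$, recall that~$P_1(x,\,\cdot\,)=S(x;\,\cdot\,)_*\ell$, since~$x_1=S(x;\eta_1)$ with~$\eta_1\sim\ell$. Using property~(b) of \Cref{condition:C4} I factor~$\ell=[(\mathsf P_{\xF_j})_*\ell]\otimes[(\mathsf P_{\xH_j})_*\ell]$ for a large index~$j$ to be fixed, and write~$\eta=u+w$ with independent components~$u\in\xF_j$ and~$w\in\xH_j$, where~$u$ carries the positive continuous density supplied by property~(c). This disintegrates the pushforward as
\[
	P_1(x,\Gamma)=\int_{\xH_j}\Big(\,\int_{\xF_j}\mathbf 1_{\Gamma}\big(S(x;u+w)\big)\,[(\mathsf P_{\xF_j})_*\ell](\xdx u)\Big)\,[(\mathsf P_{\xH_j})_*\ell](\xdx w).
\]
The crux is to show that, for~$x$ close to~$p$ and~$w$ ranging over a fixed ball~$\xB_{\xH_j}(0,\varrho)$ of positive~$(\mathsf P_{\xH_j})_*\ell$-measure, the finite-dimensional map~$u\mapsto S(x;u+w)$ is~$\epsilon$-close (in the sup distance of \Cref{condition:C3}) to the solidly controllable reference~$S(p;\,\cdot\,)$ on~$\mathcal K_\xE$. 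Via the identification~$u=\mathsf P_{\xF_j}\zeta$ for~$\zeta\in\mathcal K_\xE$, the triangle inequality splits the error into~$d_\xM\big(S(x;u+w),S(p;u+w)\big)$ and~$d_\xM\big(S(p;u+w),S(p;\zeta)\big)$; the first is made small by continuity of~$S$ in its~$\xM$-argument as~$x\to p$ (uniformly over the relevant compact set of noises), and the second is small because~$u+w-\zeta=w-\mathsf P_{\xH_j}\zeta$ is small once~$\varrho$ is small and~$j$ is large by property~(d), combined with the local Lipschitz continuity of the smooth map~$S(p;\,\cdot\,)$ from property~(i). Taking~$\widehat\delta,\varrho$ small and~$j$ large forces the total error below~$\epsilon$, so \Cref{condition:C3}(ii) yields the robust covering~$\widehat{\xB}_\xM\subset S\big(x;\mathsf P_{\xF_j}\mathcal K_\xE+w\big)$, uniformly in the admissible~$x$ and~$w$.

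It remains to convert this topological covering into a quantitative density bound, which is precisely the role of the measure transformation theorem from~\cite{Sh-07} recalled in \Cref{section:appendix_measuretransformation}. Applied to the~$C^1$ finite-dimensional map~$u\mapsto S(x;u+w)$ (regularity from \Cref{condition:C3}(i)) against the positive reference density on~$\xF_j$, it provides a constant~$c>0$ and a fixed sub-probability measure supported in~$\widehat{\xB}_\xM$ that minorizes the inner pushforward, uniformly over~$x\in\operatorname{clos}_{\xM}\xB_\xM(p,\widehat\delta)$ and~$w\in\xB_{\xH_j}(0,\varrho)$. Integrating this lower bound over the positive-measure set of~$w$ produces the desired fixed minorant~$m$ with~$m(\xM)=c_0>0$, completing the argument. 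I expect the main difficulty to lie in the penultimate step: securing the uniform~$\epsilon$-closeness simultaneously in~$x$ and in the truncation index~$j$ while retaining a set of background noises~$w$ of positive measure, and then verifying that the hypotheses of the measure transformation theorem (smoothness together with robust solid controllability) hold uniformly, so that the resulting density bound is genuinely independent of~$x$.
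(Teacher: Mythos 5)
Your overall strategy -- a uniform minorization $P_1(x,\cdot)\geq m$ near $p$ followed by the overlap identity \eqref{equation:varnormdensityidentiy} -- is exactly the paper's, and you correctly identify the three ingredients (solid controllability, decomposability, the measure transformation theorem). But there is a genuine gap at the point you yourself flag as the "penultimate step": the measure transformation \Cref{theorem:measuretransformation} does not convert a topological covering into a density bound. Its essential hypothesis is item~4, the existence of a point $\widehat\zeta$ at which the derivative $(D_\eta F)(p,\widehat\zeta)$ has \emph{full rank}, and nowhere in your argument do you produce such a point. A covering $\widehat{\xB}_\xM\subset S(x;\mathsf P_{\xF_j}\mathcal K_\xE+w)$ by a map that is merely $\xCone$ is compatible, in principle, with the derivative being rank-deficient everywhere (when $\dim\xF_j$ exceeds $\dim\xM$, Sard's theorem fails for $\xCone$ maps), in which case the pushforward of the reference density could be singular with respect to $\operatorname{vol}_\xM$ and no minorant $m$ would exist. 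So the "conversion" step, as written, does not go through.

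The missing idea is Sard's theorem applied to the \emph{smooth} finite-dimensional map $S(p;\cdot)\colon\xO\to\xM$, $\xO\subset\xF_j$ a ball: solid controllability (via $\Phi(\zeta)=S(p;\mathsf P_{\xF_j}\zeta)$ and \Cref{condition:C4}-(d)) forces the image of this map to contain the open ball $\widehat{\xB}_\xM$, hence to have positive measure, so by Sard it must contain a regular value; this yields $\widehat\zeta\in\xO$ with $(D_\zeta S)(p;\widehat\zeta)$ onto, which is precisely hypothesis~4 of \Cref{theorem:measuretransformation}. Note that this is why \Cref{condition:C3}(i) demands that $S(p;\cdot)$ be \emph{infinitely} differentiable rather than just $\xCone$ -- your proposal only invokes the $\xCone$ regularity, which is not enough for Sard when $\dim\xF_j>\dim\xM$. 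Once the full-rank point is in hand, the theorem is applied directly to $F=S$ on $\operatorname{clos}_\xM\xB_\xM(p,\delta)\times\xE$ with the full law $\ell$ (hypothesis~5 is exactly \Cref{condition:C4}); your additional disintegration over $w\in\xH_j$ and the restriction to a ball of background noises of positive measure are unnecessary, since the factorization of $\ell$ is already built into the cited theorem. The remainder of your argument (continuity of $\psi$, shrinking $\widehat\delta$, setting $r=1-c_0$) matches the paper.
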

	
	\begin{proof}
		To begin with, we fix the objects $\epsilon, \delta, \widetilde{\xB}_{\xE}$, and $\mathcal{K}_{\xE}$ as in \Cref{condition:C3}. In particular,
		after reducing $\delta$ if necessary, we can assume that $\operatorname{clos}_{\xM} \xB_\xM(p, \delta)$ does not intersect the boundary~$\partial \xM$ of $\xM$.  	  
		
		\paragraph{Step 1. Fixing a projection.}
		Owing to the assumption (d) in \Cref{condition:C4}, combined with the continuity of $S(p;\,\cdot\,) \colon \xE\longrightarrow \xM$ and the compactness of $\mathcal{K}_{\xE}$, we obtain the bound
		\[
			\sup_{\zeta \in \mathcal{K}_{\xE}} d_M(S(p;\mathsf{P}_{\xF_j} \zeta),S(p;\zeta)) \leq \epsilon
		\]
		for any sufficiently large $j  \geq 1$. Hereafter, we fix such a number $j \in \mathbb{N}$.
		\paragraph{Step 2. Utilizing solid controllability.} In order to employ the assumption (ii) of \Cref{condition:C3}, we define a continuous function $\mathcal{K}_{\xE}\longrightarrow \xM$ by means of
		\[
			\zeta \mapsto \Phi(\zeta) \coloneq S(p;\mathsf{P}_{\xF_j} \zeta).
		\]
		Then, one can select a ball~$\xO \subset \xF_j$ so that the image of~$S(p;\,\cdot\,)\colon \xO\longrightarrow \xM$ contains the ball $\widehat{\xB}_{\xM} \subset \xM \setminus \partial \xM$. By increasing $j$ if necessary, thanks to \Cref{condition:C4}-(d), we have $\mathsf{P}_{\xF_j}(\mathcal{K}_{\xE}) \subset \widetilde{B}_{\xE}$. Therefore, one may assume that $\xO \subset \widetilde{\xB}_{\xE}$; hence, the mapping~$S(p;\,\cdot\,)\colon \xO\longrightarrow \xM$ is smooth. Thanks to Sard's theorem, there exists an element~$\widehat \zeta\in \xO$ such that $(D_\zeta S)(p;\widehat \zeta)$ has full rank. After possibly reducing $\delta$, we can further suppose that~$\xB_\xE(\widehat \zeta, \delta)\subset \widetilde{\xB}_{\xE}$. 
		\paragraph{Step 3. Measure transformation.} Let $\operatorname{vol}_\xM(\,\cdot\,)$ denote the Riemannian measure on~$\xM$. Owing to the previous step, we can now apply 
		\Cref{theorem:measuretransformation} to the mapping
		\[
			F \coloneq S \colon \xX \times  \xE \longrightarrow \xM, \quad \xX \coloneq \operatorname{clos}_{\xM} \xB_\xM(p, \delta).
		\]
		As a result, there is a number~$\widehat{\delta} > 0$ and a continuous function
		\[
			\psi\colon \operatorname{clos}_{\xM} \xB_\xM(p,\widehat{\delta})\times \xM\longrightarrow\mathbb{R}_+
		\]
		with
		\[	
			\psi(p,\widehat y) > 0, \quad \left(S(x;\,\cdot\,)_*\ell\right)(\xdx{y})  \geq \psi(x, y) \operatorname{vol}_\xM(\xdx{y})
		\]
		for $\widehat{y} \coloneq S(p;\widehat{\zeta})$ and all $x \in    \operatorname{clos}_{\xM}\xB_\xM(p,\widehat{\delta})$. Even more, by possibly shrinking $\widehat{\delta}$, one can choose a small number~$\varepsilon > 0$ so that $\psi(x,y)  \geq \varepsilon > 0$ for each $x\in \operatorname{clos}_{\xM}\xB_\xM(p,\widehat{\delta})$ and $ y\in \operatorname{clos}_{\xM}\xB_\xM(\widehat{y},\widehat{\delta})$. Consequently, the inequality
		\[
			\left(S(x;\,\cdot\,)_*\ell\right)(\xdx{y}) \geq \varepsilon \, \mathbb{I}_{\xB_\xM(\widehat y,\widehat{\delta})}(y) \operatorname{vol}_\xM(\xdx{y})
		\]
		holds for all points $x\in \operatorname{clos}_{\xM}\xB_\xM(p,\widehat{\delta})$. Hence, by using the representation~\eqref{equation:varnormdensityidentiy}, while recalling that $S(x;\,\cdot\,)_*\ell=P_1(x,\,\cdot\,)$, we arrive at the estimate
		\[	
				\|P_1(x,\,\cdot\,)-P_1(x',\,\cdot\,)\|_{\mbox{var}} \leq 1-\varepsilon \operatorname{vol}_\xM(\xB_\xM(\widehat y,\widehat{\delta})),
		\]
		and set $r \coloneq 1-\varepsilon \, \operatorname{vol}_\xM(\xB_\xM(\widehat y, \widehat{\delta}))$.
	\end{proof}
	
	\subsubsection{Coupling construction}\label{subsubsection:cs}
	
	Given two states $x,x'\in \xM$, let $\{x_k\}_{k \in \mathbb{N}}$ and $\{x_k'\}_{k \in \mathbb{N}}$ be the trajectories of \eqref{equation:generalRDS} issued from~$x$ and~$x'$, respectively. As $\{(x_k,x_k')\}_{k \in \mathbb{N}}$ might not be contractive, we resort to a coupling method in order to constract instead an auxiliary process $\{(\widetilde{x}_k, \widetilde{x}_k')\}_{k \in \mathbb{N}}$, such that $\{\widetilde{x}_k\}_{k \in \mathbb{N}}$ and $\{\widetilde{x}_k'\}_{k \in \mathbb{N}}$ have the same laws as $\{x_k\}_{k \in \mathbb{N}}$ and $\{x_k'\}_{k \in \mathbb{N}}$ respectively, but possesses better pathwise properties. More specifically, the goal is to choose $\{(\widetilde{x}_k, \widetilde{x}_k')\}_{k \in \mathbb{N}}$ such that~$\widetilde{x}_k$ and~$\widetilde{x}_k'$ coincide after a random time~$\sigma$ of finite exponential moment; it will be seen in \Cref{subsubsection:completion_proof_mainresult} that the existence of such a $\sigma$ almost immediately implies exponential mixing. 
	
	\paragraph{The auxiliary process.} We will define the new process $\{(\widetilde{x}_k,\widetilde{x}_k')\}_{k \in \mathbb{N}}$ with the help of coupling operators $\mathcal{R}_i\colon \xM\times \xM\longrightarrow \xM$, $i\in\{1,2\}$.
	Their construction is separated into the following three cases.
	\begin{itemize}
		\item If $x=x'$, we set $\mathcal{R}_1(x,x')=\mathcal{R}_2(x,x')=S(x;\eta_1)$.
		\item If $x\neq x'$ and~$x,x'\in \operatorname{clos}_{\xM}\xB_\xM(p,\widehat{\delta})$, where $\widehat{\delta}>0$ is fixed via \Cref{lemma:transition_bound}, then $(\mathcal{R}_1(x,x'),\mathcal{R}_2(x,x'))$ is chosen (by the method described in \cite{KS-12}) as a maximal coupling for the pair $(P_1(x,\,\cdot\,),P_1(x',\,\cdot\,))$. More specifically, we apply~\cite[Theorem~1.2.28]{KS-12} with
		\begin{gather*}
			X \coloneq \xM, \quad Z \coloneq \operatorname{clos}_{\xM}\xB_\xM(p,\widehat{\delta})\times \operatorname{clos}_{\xM}\xB_\xM(p,\widehat{\delta}),\\
			(\mu_1(z,\,\cdot\,),\mu_2(z,\,\cdot\,))\coloneq (P_1(x,\,\cdot\,),P_1(x',\,\cdot\,)), \quad  z \coloneq (x,x')\in Z,
		\end{gather*}which provides two random variables $\mathcal{R}_1(x,x')$ and $\mathcal{R}_2(x,x')$,  with respective distributions $P_1(x,\,\cdot\,)$ and  $P_1(x',\,\cdot\,)$, such that 
		\[
			\mathbb{P}\left\{\mathcal{R}_1(x,x')\neq \mathcal{R}_2(x,x')\right\} =\| P_1(x,\,\cdot\,)-P_1(x',\,\cdot\,)\|_{\operatorname{var}}.
		\]
  		\item Otherwise, we set  $\mathcal{R}_1(x,x')=S(x;\eta)$ and $\mathcal{R}_2(x,x')=S(x';\eta')$, where~$\eta$ and~$\eta'$ are independent copies of the random variable $\eta_1$.
	\end{itemize}
	All above-mentioned random variables are without loss of generality defined on the same probability space $(\Omega, \mathcal{F}, \mathbb{P})$. Next, in order to construct~$\{(\widetilde x_k,\widetilde x_k')\}_{k \in \mathbb{N}}$, we take independent copies $\{(\Omega_k, \mathcal{F}_k, \mathbb{P}_k)\}_{k \in \mathbb{N}}$  of $(\Omega, \mathcal{F}, \mathbb{P})$ and denote by $(\widehat\Omega, \widehat{\mathcal{F}}, \widehat{\mathbb{P}})$ their direct product. Then, for any~$x,x'\in \xM$, $k \in \mathbb{N}$, and $\omega=(\omega_1, \omega_2,\ldots)\in \widehat\Omega$, the anticipated process is given by
	\begin{gather*}
		\widetilde{x}_0 = x, \quad 	\widetilde x_k(\omega) = \mathcal{R}_1(\widetilde x_{k-1}(\omega),\widetilde x_{k-1}'(\omega), \omega_k),\\
		\widetilde{x}_0'=x', \quad  \widetilde{x}_k'(\omega)=\mathcal{R}_2(\widetilde x_{k-1}(\omega),\widetilde x_{k-1}'(\omega), \omega_k).
	\end{gather*}
	From this construction, it follows that the processes $\{\widetilde{x}_k\}_{k \in \mathbb{N}}$ and $\{\widetilde x_k'\}_{k \in \mathbb{N}}$ have the same laws as $\{x_k\}_{k \in \mathbb{N}}$ and $\{x_k'\}_{k \in \mathbb{N}}$ respectively. 
	
	\paragraph{The random time.} Owing to the above constructions, we define the desired random time via
	\begin{equation*}
		\sigma \coloneq \min\{k  \geq 0  \, | \,   \widetilde{x}_n = \widetilde{x}_n' \mbox{ for all $n \geq k$} \}.
	\end{equation*}
	Throughout, the convention $\min\varnothing\coloneq +\infty$ is used.	
 	\begin{lmm}[Finite exponential moment]\label{lemma:random_time}
		There are numbers~$\gamma>0$ and~$C>0$ such that for all $x, x'\in \xM$ one has the estimate
		\[
			\mathbb{E}_{(x,x')}\operatorname{e}^{\gamma \sigma} \leq C(V(x)+V(x')).
		\]
	\end{lmm}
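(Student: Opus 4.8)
The plan is to prove the finite exponential moment estimate by combining the exponential recurrence of the pair $(\widetilde{x}_k,\widetilde{x}_k')$ to the small set $\operatorname{clos}_{\xM}\xB_\xM(p,\widehat{\delta})^{\times 2}$ with the uniform lower bound on the coupling probability furnished by \Cref{lemma:transition_bound}. The key structural fact is that $\sigma$ is the last time the coupled trajectories differ; since once $\widetilde{x}_k=\widetilde{x}_k'$ the first case of the coupling construction forces $\widetilde{x}_n=\widetilde{x}_n'$ for all $n\geq k$, the event $\{\sigma>k\}$ means the trajectories have not yet permanently merged by time $k$. Thus controlling $\mathbb{E}_{(x,x')}\operatorname{e}^{\gamma\sigma}$ amounts to showing that the probability of not having coalesced decays geometrically, uniformly in the starting points, after accounting for the recurrence time into the small set.

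First I would invoke the exponential recurrence estimate from \Cref{section:appendix_exponentialrecurrence} (which, per the earlier remark, follows from Conditions~\Rref{condition:C1} and~\Rref{condition:C2}): the Lyapunov structure gives that the hitting time $\tau$ of the product small set $D\coloneq \operatorname{clos}_{\xM}\xB_\xM(p,\widehat{\delta})\times\operatorname{clos}_{\xM}\xB_\xM(p,\widehat{\delta})$ by the coupled process has a finite exponential moment bounded by $C(V(x)+V(x'))$, using that $V(x)+V(x')$ is a Lyapunov function for the product chain and approximate controllability guarantees accessibility of the neighborhood of $p$ from everywhere. Next, whenever the pair sits in $D$ (and the two points are distinct), the second case of the coupling construction applies, and \Cref{lemma:transition_bound} guarantees that with probability at least $1-r>0$ one has $\mathcal{R}_1=\mathcal{R}_2$, i.e. the trajectories merge at that step. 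Because the first case then keeps them equal forever after, a single successful coupling at a visit to $D$ places that time at or before $\sigma$'s terminal value; the chance of success at each visit is bounded below by $1-r$ independently of the position in $D$.

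The argument I would assemble is a standard recurrence-plus-coupling scheme: define the successive return times to $D$, and at each return attempt a coupling that succeeds with probability $\geq 1-r$. The number $N$ of returns needed before a permanent merge is stochastically dominated by a geometric random variable with parameter $1-r$, hence has exponential moments; and the total time $\sigma$ is bounded by the sum of $N$ excursion lengths, each controlled via the strong Markov property by the uniform-in-starting-point exponential recurrence bound. Combining these through Hölder's inequality (choosing $\gamma$ small enough that both the geometric tail of $N$ and the exponential moments of the excursion times are summable) yields $\mathbb{E}_{(x,x')}\operatorname{e}^{\gamma\sigma}\leq C(V(x)+V(x'))$, with the initial factor $V(x)+V(x')$ coming from the first recurrence step.

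The main obstacle I anticipate is the bookkeeping needed to pass from the one-step coupling success probability at a visit to $D$ to a clean geometric domination of the number of required excursions, while simultaneously keeping the exponential moments of the excursion durations under control and selecting a single $\gamma>0$ that makes the combined series converge. Care is needed because after a failed coupling the pair is relaunched from $D$ but not from a fixed point, so I must use the uniformity of both the coupling bound (Condition~\Rref{condition:C3}/\Cref{lemma:transition_bound}, valid for all $x,x'\in\operatorname{clos}_{\xM}\xB_\xM(p,\widehat{\delta})$) and the recurrence estimate over the whole set $D$; the strong Markov property must be applied at each return time to decouple the excursions. Once the uniformity is correctly exploited, the exponential-moment estimate is essentially a geometric-sum computation, so I expect the analytical heart of the proof to lie in setting up these stopping times and verifying the domination rather than in any delicate inequality.
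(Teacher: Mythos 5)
Your proposal is correct and follows essentially the same route as the paper: exponential recurrence of the coupled pair to the product ball around $p$ (the paper's \Cref{lemma:recurrence2}), the uniform coupling success probability $1-r$ from \Cref{lemma:transition_bound} at each visit giving a geometric bound $\mathbb{P}_{(x,x')}\{\sigma>\widetilde{\tau}_{\delta}(k+1)\}\leq r^k$, exponential moments of the successive return times via the strong Markov property and the boundedness of $V$ on the ball, and a final combination through H\"older's inequality with $\gamma$ chosen small enough that the geometric series converges. The bookkeeping you anticipate is exactly what the paper carries out with the stopping times $\widetilde{\tau}_{\delta}(k)$.
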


	\begin{proof} 
		Let us denote by $\delta_0 > 0$ a number smaller than $\delta$ in \Cref{condition:C3} and $\widehat{\delta}$ from \Cref{lemma:transition_bound}; in what follows, we again write $\delta$ instead of $\delta_0$ and might further reduce the value of $\delta$ without changing the symbol. To begin with, a sequence of stopping times is given via
		\[
			\widetilde{\tau}_{\delta}(0) \coloneq 0, \quad \widetilde{\tau}_{\delta} (k) \coloneq \min\{n>\widetilde{\tau}_{\delta} (k-1) \,\,|\,\, \widetilde x_n,\widetilde x_n'\in \xB_\xM(p, \delta)\},\quad k \geq 1. 	
		\]
		By \Cref{lemma:recurrence2}, there are~$\varkappa>0$ and~$C>0$ with
		\begin{equation}\label{equation:varkappaC_0est}
			\mathbb{E}_{(x,x')} \operatorname{e}^{\varkappa \widetilde{\tau}_{\delta}(1)} \leq  C(V(x)+V(x'))
		\end{equation}
		for all $x,x'\in \xM$. Since~$V$ is continuous and $\operatorname{clos}_{\xM}\xB_\xM(p, \delta)$ compact, it follows that
		\begin{equation}\label{equation:boundedsupV}
				\sup_{x\in \xB_\xM(p, \delta)}|V(x)| \eqcolon D < \infty.
		\end{equation}
		Combining \eqref{equation:boundedsupV} with the inequality in \eqref{equation:varkappaC_0est}, the strong Markov property, and the fact that
		\[
			x_{\widetilde{\tau}_{\delta} (k-1)},x_{\widetilde{\tau}_{\delta} (k-1)}'\in \xB_\xM(p, \delta),
		\]
		one obtains the estimate
		\begin{equation}\label{equation:xxpineq}
			\begin{aligned}
				\mathbb{E}_{(x,x')} \operatorname{e}^{\varkappa \widetilde{\tau}_{\delta} (k)} & = \mathbb{E}_{(x,x')} \operatorname{e}^{\varkappa \widetilde{\tau}_{\delta} (k-1)}\mathbb{E}_{X(k)}
				\operatorname{e}^{\varkappa \widetilde{\tau}_{\delta} (1)}\\
				& \leq C \mathbb{E}_{(x,x')} \left( \left(V(x_{\widetilde{\tau}_{\delta} (k-1)})+V(x_{\widetilde{\tau}_{\delta} (k-1)}')\right) \operatorname{e}^{\varkappa \widetilde{\tau}_{\delta} (k-1)}\right)\\&\leq 2CD\mathbb{E}_{(x,x')} \operatorname{e}^{\varkappa \widetilde{\tau}_{\delta} (k-1)},	
			\end{aligned}
		\end{equation}
		where
		\[
			X(k)\coloneq (x_{\widetilde{\tau}_{\delta} (k-1)},x_{\widetilde{\tau}_{\delta} (k-1)}').
		\]
		Iterating \eqref{equation:xxpineq}, while utilizing~\eqref{equation:varkappaC_0est}, we~get
		\begin{equation}\label{equation:kappaineq}
			\mathbb{E}_{(x,x')}  \operatorname{e}^{\varkappa \widetilde{\tau}_{\delta} (k)} \leq C_0^k  (V(x)+V(x'))
		\end{equation}
		for $x,x'\in \xM$ and $C_0\coloneq 2CD$. Given the number $r \in (0, 1)$ from \Cref{lemma:transition_bound}, let us show that
		\begin{equation}\label{equation:subgoalrkbd}
			\mathbb{P}_{(x,x')}\{\sigma>\widetilde{\tau}_{\delta} (k+1)\}\leq r^k
		\end{equation}
		is true for all $k  \geq  0$. Indeed, the construction of the process $\{\widetilde x_k, \widetilde x_k'\}_{k \in \mathbb{N}}$ implies together with \eqref{equation:transitionbound} that
		\begin{multline}\label{equation:tbi}
			\mathbb{P}_{(x,x')}\left\{\widetilde x_{\widetilde{\tau}_{\delta} (k)+1}\neq \widetilde x_{\widetilde{\tau}_{\delta} (k)+1}'\right\}  \\
			\begin{aligned}
				& = \mathbb{P}_{(x,x')}\left\{\widetilde x_{\widetilde{\tau}_{\delta} (k)+1}\neq
				\widetilde x_{\widetilde{\tau}_{\delta} (k)+1}'|\widetilde x_{\widetilde{\tau}_{\delta} (k)}\neq \widetilde x_{\widetilde{\tau}_{\delta} (k)}'\right\} \mathbb{P}\left\{\widetilde x_{\widetilde{\tau}_{\delta} (k)}\neq
				\widetilde x_{\widetilde{\tau}_{\delta} (k)}'\right\}\\
				& \leq r\, \mathbb{P}_{(x,x')}\left\{\widetilde x_{\widetilde{\tau}_{\delta} (k)}\neq
				\widetilde x_{\widetilde{\tau}_{\delta} (k)}'\right\}\\
				& \leq r\, \mathbb{P}_{(x,x')}\left\{\widetilde x_{\widetilde{\tau}_{\delta} (k-1)+1}\neq \widetilde x_{\widetilde{\tau}_{\delta} (k-1)+1}'\right\}
			\end{aligned}
		\end{multline}
		for all  $k \geq 1$. Iterating \eqref{equation:tbi} yields
		\[
			\mathbb{P}_{(x,x')}\left\{\widetilde x_{\widetilde{\tau}_{\delta} (k)+1}\neq \widetilde x_{\widetilde{\tau}_{\delta} (k)+1}'\right\}\leq r^k, \quad k \geq 0.
		\]
		After noticing that
		\[
			\mathbb{P}_{(x,x')}\left\{\sigma>\widetilde{\tau}_{\delta} (k+1)\right\}\le\mathbb{P}_{(x,x')}\left\{\widetilde x_{\widetilde{\tau}_{\delta} (k)+1}\neq \widetilde x_{\widetilde{\tau}_{\delta} (k)+1}'\right\}, \quad k \geq 0,
		\]
		we obtain \eqref{equation:subgoalrkbd}, and the Borel--Cantelli lemma subsequently provides
		\[
			\mathbb{P}_{(x,x')}\left\{\sigma<+\infty\right\} = 1.
		\]
		Now, recalling that $\varkappa$ is the number from \eqref{equation:kappaineq}, a large $R > 0$ and a small $\gamma > 0$ are selected such that
		\[
			C_0^{1/R} r^{1-1/R} < 1, \quad R \gamma < \varkappa.
		\]
		Then, by combining H\"older's
		inequality with \eqref{equation:kappaineq} and \eqref{equation:subgoalrkbd}, one arrives at
		\begin{align*}
			\mathbb{E}_{(x,x')} \operatorname{e}^{\gamma \sigma}&\le1+ \sum_{k=0}^\infty \mathbb{E}_{(x,x')} \left(\mathbb{I}_{\{\widetilde{\tau}_{\delta} (k)<\sigma\le
				\widetilde{\tau}_{\delta} (k+1)\}} \operatorname{e}^{\gamma  \sigma}\right)
			\\&\le1+\sum_{k=0}^\infty \mathbb{E}_{(x,x')}
			\left(\mathbb{I}_{\{\widetilde{\tau}_{\delta} (k)<\sigma\leq \widetilde{\tau}_{\delta} (k+1)\}} \operatorname{e}^{\gamma \widetilde{\tau}_{\delta} (k+1)}\right)
			\\&\le
			1+\sum_{k=0}^\infty  \mathbb{P}_{(x,x')}\left\{\sigma>\widetilde{\tau}_{\delta} (k)\right\}^{1-\frac{1}{R}} \left(\mathbb{E}_{(x,x')} \operatorname{e}^{R\gamma 
				\widetilde{\tau}_{\delta} (k+1)}\right)^{\frac{1}{R}}
			\\&\leq 1+C_0^\frac{1}{R}r^{\frac{1}{R}-1}(V(x)+V(x'))\sum_{k=0}^\infty
			\left(r^{1-\frac{1}{R}}C_0^\frac{1}{R}\right)^k
			\\&\leq C (V(x)+V(x'))\quad\text{for $x,x'\in \xM$}.
		\end{align*}
	\end{proof}
	
	\subsubsection{Completion of the proof}\label{subsubsection:completion_proof_mainresult}
	
	To establish the exponential mixing estimate \eqref{equation:definition_exponentialmixing}, it suffices to show for any $k \geq 0$ and~$x,x' \in \xM$ the estimate
	\begin{equation}\label{equation:subgoalcompletionprf}
		\|\mathfrak{P}_k^* {\delta}_x - \mathfrak{P}_k^* {\delta}_{x'} \|_{\operatorname{var}}
		\leq C \operatorname{e}^{-\gamma k} (V(x) +V(x')),
	\end{equation}
	where $\delta_x$ refers to the Dirac measure concentrated at $x$.~In order to verify \eqref{equation:subgoalcompletionprf}, we take any $f \in \xLinfty(\xM)$ with~$\|f\|_{\infty}\leq 1$.~As the processes $\{(\widetilde x_k,\widetilde x_k')\}_{k \in \mathbb{N}}$ and~$\{(x_k,x_k')\}_{k \in \mathbb{N}}$ possess the same law, it follows that
	\begin{equation*}
		\mathfrak{P}_k f (x) - \mathfrak{P}_k f(x') = \mathbb{E}_{(x,x')} \left(f(\widetilde x_k) - f(\widetilde x_k') \right).
	\end{equation*}The assumption that $\|f\|_{\infty}\leq 1$, the Chebyshev inequality, and \Cref{lemma:random_time} imply that
	\begin{align*}
		\left|\mathbb{E}_{(x,x')} \left(f(\widetilde x_k) - f(\widetilde x_k')  \right)\right|&\leq 	  \mathbb{E}_{(x,x')}  | f(\widetilde x_k) - f(\widetilde x_k')  | 
		\\&= 	  \mathbb{E}_{(x,x')} \left(\mathbb{I}_{\{\widetilde x_k \neq  \widetilde x_k' \}}| f(\widetilde x_k) - f(\widetilde x_k')  |\right)
		\\&\leq 2 \mathbb{P}_{(x,x')}\{\widetilde x_k \neq  \widetilde x_k' \}
		\\&\leq 2 \mathbb{P}_{(x,x')}\{ \sigma>k \}
		\\&\leq 2C  \operatorname{e}^{-\gamma k} \left(V(x) +V(x')\right).
	\end{align*}
	As a result, (\cf~\eqref{equation:var})
	\begin{align*}
		\|\mathfrak{P}_k^* \delta_x - \mathfrak{P}_k^* \delta_{x'} \|_{\operatorname{var}}
		& = \frac12\sup_{ \|f\|_{\infty}\leq 1}      |\mathfrak{P}_k f(x) - \mathfrak{P}_k f(x')|  \\
		& \leq C  \operatorname{e}^{-\gamma k} (V(x) +V(x')),
	\end{align*}
	which completes the proof of \Cref{theorem:main}.

	\section{Low dimensional elasto-plastic models under random forces}\label{section:elastoplasticity} 
	
 	This section illustrates \Cref{theorem:main}'s effectiveness;  Conditions~\Rref{condition:C1}--\Rref{condition:C4} from \Cref{subsection:fourconditions} are verified for a range of non-smooth elasto-plastic dynamics driven decomposable noise or by white noise. In a systematic manner, new results on the uniqueness of stationary measures and exponential mixing are concluded. 
	
	\subsection{The model}\label{subsection:TheElastoPlasticModel}
	Let $\xM \coloneq \mathbb{R} \times [-1,1]$,  $g\colon \mathbb{R}\longrightarrow \mathbb{R} \cup \{+\infty\}$ the characteristic function of the compact interval~$[-1,1]$, and suppose that the drift $f\colon \xM \longrightarrow \mathbb{R}$ and noise $\zeta$ are fixed; specific requirements are introduced below. We then consider a class of non-smooth dynamical systems having the form
	\begin{equation}\label{equation:Application1_Model}
			\dot{y} = f(y,z) + \zeta, \quad
			y \in \dot{z} + \partial g(z), \quad (y, z)(0)=(y_0,z_0), 
	\end{equation}
	where the notation $\partial g$ stands for the subdifferential of $g$ which is defined as $\partial g (\mathfrak{z}) \coloneq \left \{ \xi \in \mathbb{R} \, | \, 
    \forall \: y \in \mathbb{R}\colon  g(\mathfrak{z}) + \xi (y-\mathfrak{z}) \leq g(y)  \right \}$. 
    Here, that is,
	\[
		g(z) = \begin{cases}
			0 & \mbox{if } z \in [-1,1],\\
			+\infty & \mbox{otherwise,}
		\end{cases} 
		\quad \quad
		\partial g(z) = \begin{cases}
			0 & \mbox{if } z \in (-1,1),\\
			\mathbb{R}_{\pm} & \mbox{if } z = \pm1,\\
			\varnothing & \mbox{otherwise.}
		\end{cases}
	\] 

    \begin{ssmptn}\label{assumption:Application1_force}
		The mapping $f\colon \xM \longrightarrow \mathbb{R}$ is locally Lipschitz, it is smooth in a neighborhood $\widetilde{\xB}_{\xM} \subset \xM \setminus \partial \xM$ of some point $p = (y_p, z_p) \in \mathbb{R} \times (-1,1)$, and there are~$\alpha,C > 0$ with
		\[
			y f(y,z) \leq - \alpha y^2 + C
		\]
		for all $(y,z) \in \xM$.
	\end{ssmptn}

	If $y_p = 0$ in \Cref{assumption:Application1_force}, we fix $T_0 = 1$. Otherwise, for technical reasons (see the proof of \Cref{proposition:ressc3}), a reference time $T_0 = T_0(f(p)) \in (0,1]$ and any radius~$r_0 > 0$ are fixed so small that solutions $(y,z)$ to \eqref{equation:Application1_Model} with $\zeta = 0$ satisfy
	\begin{equation}\label{equation:characterizationT0}
		(y_0,z_0) \in \xB_{\xM}(p, r_0) \quad \Longrightarrow \quad \forall t \in [0, T_0]\colon (y(t), z(t)) \in \widetilde{\xB}_{\xM}.
	\end{equation}
	To see that this choice is possible, note that, for initial states near $p$, the system \eqref{equation:Application1_Model} stays for a short time away from the plastic phase $z = \pm1$; namely, \eqref{equation:Application1_Model} with initial state $p$ can for small times be treated as a regular ODE or integral equation.  
	
	\subsection{Decomposable noise} \label{subsection:dn}
	We begin with the consideration of decomposable noise, where the process $\zeta$ satisfies the following assumption (\cf~\Cref{Example:ONBRepCondition4}).

	\begin{ssmptn}\label{assumption:Application1_noise}
		The collection $\{\eta(t)\}_{t \in \mathbb{R}_+}$ forms a real-valued random process that admits a representation
		\[
			\zeta(t) = \sum_{k=1}^{\infty} \mathbb{I}_{[(k-1)T_0,kT_0)}(t)\eta_k(t-(k-1)T_0),
		\]
		where $\{\eta_k\}_{k \in \mathbb{N}}$ are i.i.d. random variables taking values in $\xE \coloneq \xLtwo((0,T_0);\mathbb{R})$, obeying~$\mathbb{E} \|\eta_1\|_{\xE}^2 < + \infty$, and whose law $\ell$  satisfies \Cref{condition:C4}.
	\end{ssmptn}
	Owing to Assumptions~\Rref{assumption:Application1_force} and \Rref{assumption:Application1_noise}, the initial value problem described by \eqref{equation:Application1_Model} is globally well-posed \cite{PardouxRascanu-2014}. Given any state $x_0 = (y_0,z_0) \in \xM$, the associated solution $x = (y, z)$ to \eqref{equation:Application1_Model} induces a proper $\xM$-valued random process $\{x(t)\}_{t\in\mathbb{R}_+}$ such that
	\begin{equation*}\label{equation:MarkovFamily_elastoplastic1}
		\{x_k\}_{k\in\mathbb{N}_0} \coloneq \{x(kT_0)\}_{k\in\mathbb{N}_0}
	\end{equation*}
	forms a Markov family $\{x_k,\mathbb{P}_x\}_{k \in \mathbb{N}_0}$ with the associated Markov operators~$\mathfrak{P}_k$ and~$\mathfrak{P}_k^*$.
	
	\begin{thrm}\label{theorem:main_application_elastoplastic}
		Under Assumptions~\Rref{assumption:Application1_force} and \Rref{assumption:Application1_noise}, the Markov family $\{x_k,\mathbb{P}_x\}_{k \in \mathbb{N}_0}$ admits a unique stationary measure $\mu \in \mathcal{P}(\xM)$ which is exponentially mixing in the sense of \Cref{definition:exponentialmixing}
		with the Lyapunov function
		\begin{equation}\label{equation:lfa}
			V(y,z) = 1+y^2, \quad (y,z) \in \xM.
		\end{equation}
	\end{thrm}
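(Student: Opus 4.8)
The plan is to verify the four abstract Conditions~\Rref{condition:C1}--\Rref{condition:C4} for the elasto-plastic RDS and then invoke \Cref{theorem:main} directly. First I would establish the \emph{Lyapunov structure} (\Cref{condition:C1}) for $V(y,z)=1+y^2$. Applying It\^o's formula (or, for general decomposable noise, a direct energy estimate) to $y^2$ along solutions of \eqref{equation:Application1_Model} and using the dissipativity bound $yf(y,z)\leq-\alpha y^2+C$ from \Cref{assumption:Application1_force}, one gets an exponential decay of $\mathbb{E}\,y(t)^2$ up to an additive constant, which after integrating over $[0,T_0]$ yields $\mathbb{E}_x V(x_1)\leq qV(x)+A$ with $q\in(0,1)$; the constraint $z\in[-1,1]$ makes the level sets $\{V\leq R\}$ compact in $\xM$, so $V$ is a genuine Lyapunov function. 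The noise enters only through $\mathbb{E}\|\eta_1\|_{\xE}^2<\infty$, so the argument is uniform over the admissible noise classes.

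Next I would address \emph{approximate controllability to the interior point} $p$ (\Cref{condition:C2}). The idea is to steer any initial state $x=(y_0,z_0)$ into a small ball around $p=(y_p,z_p)$ by an explicit choice of control $\zeta$. Because $z_p\in(-1,1)$, the target lies in the elastic (smooth) regime. One first drives $z$ into the interior using the relation $y\in\dot z+\partial g(z)$ by prescribing $\dot z$ and reading off the required $y$, then adjusts $y$ toward $y_p$ via the equation $\dot y=f(y,z)+\zeta$, solving algebraically for $\zeta$ on a finite time window. Splitting the horizon into finitely many steps of length $T_0$ gives the required finite $n=n(R,\delta)$, with the bound on $n$ uniform over $x$ in a ball of radius $R$. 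The estimates here are essentially explicit, as announced in the introduction.

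The main obstacle, and the most delicate part, is \emph{solid controllability from $p$} together with the \emph{regularity} of $S$ near $p$ (\Cref{condition:C3}); this is precisely where the separate \Cref{proposition:ressc3} (referenced via \eqref{equation:characterizationT0}) does the work. The key observation is that for $T_0=T_0(f(p))$ chosen small enough, solutions started near $p$ remain in the smooth neighborhood $\widetilde{\xB}_{\xM}$ for all $t\in[0,T_0]$ and never touch the plastic phase $z=\pm1$; on this window the differential inclusion reduces to a regular ODE, so $S(x;\cdot)$ is Fr\'echet differentiable with continuous derivative and $S(p;\cdot)$ is smooth, yielding the regularity in part~(i). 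For the solid controllability in part~(ii), I would show the reduced system is exactly controllable near $p$ with a control depending continuously (indeed smoothly) on the target, so that the derivative $D_\zeta S(p;\widehat\zeta)$ is surjective at some $\widehat\zeta$; the implicit/inverse function theorem then makes the image of $S(p;\cdot)$ cover a fixed ball $\widehat{\xB}_{\xM}$ robustly under uniform $\epsilon$-perturbations $\Phi$, which is exactly the solid controllability implication. Keeping the trajectory away from the boundary during this step is the crux of the argument.

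Finally, \emph{decomposability} (\Cref{condition:C4}) is verified by invoking \Cref{Example:ONBRepCondition4}: under \Cref{assumption:Application1_noise} the law $\ell$ of $\eta_1$ on $\xE=\xLtwo((0,T_0);\mathbb{R})$ is of the form \eqref{equation:etarep} relative to a trigonometric orthonormal basis, so properties (a)--(d) hold, and in the white-noise case the same structure follows from the Karhunen--Lo\`eve expansion of Brownian motion. With all four conditions in hand, \Cref{theorem:main} gives a unique exponentially mixing stationary measure for the discrete-time family $\{x_k,\mathbb{P}_x\}_{k\in\mathbb{N}_0}$, and since $V(y,z)=1+y^2$ is the claimed Lyapunov function the statement of \Cref{theorem:main_application_elastoplastic} follows.
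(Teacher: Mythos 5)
Your proposal is correct and follows essentially the same route as the paper: verify the Lyapunov structure for $V(y,z)=1+y^2$ via the dissipativity bound and Gr\"onwall, obtain approximate controllability from explicit controls, get regularity and solid controllability by choosing $T_0$ small so trajectories from near $p$ stay in the smooth elastic regime and then applying exact controllability with continuous dependence on the target, and conclude via \Cref{theorem:main}. The only (harmless) deviation is in \Cref{condition:C4}: the paper's \Cref{assumption:Application1_noise} directly postulates that the law of $\eta_1$ satisfies that condition, so no verification via \Cref{Example:ONBRepCondition4} or a Karhunen--Lo\`eve expansion is needed here (the latter is only relevant for the separate white-noise theorem).
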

	\begin{proof}
		Let $S\colon \xM \times \xE\longrightarrow \xM$, $(x_0, \zeta)\mapsto x(T_0)$ be the resolving operator for the system~\eqref{equation:Application1_Model}.~Then, by definition,
		\[
				x(kT_0) = S(x((k-1)T_0);\eta_k), \quad k  \geq 1.
		\]
		To check the hypotheses of \Cref{theorem:main},
		we first observe that \Cref{condition:C1} is satisfied with $V\colon \xM\longrightarrow [1,+\infty)$ from~\eqref{equation:lfa}. Indeed, this function is continuous and has compact level sets. By multiplying~\eqref{equation:Application1_Model} with its solution $y$, involving \Cref{assumption:Application1_force}, and resorting to Young's inequality, one finds
		\begin{equation}\label{equation:YEest}
				\frac{1}{2} \xdrv{}{t} y^2 \leq -\alpha y^2+C+\zeta y \leq - \frac{\alpha}{2} y^2  + C \left(1 + \zeta^2 \right), \quad \zeta\in \xE, t\in [0, T_0].
		\end{equation}
		Applying Gr\"onwall's inequality in \eqref{equation:YEest}, taking the~expectation, and utilizing the assumption that $\mathbb{E} \|\eta_1\|_{\xE}^2 < +\infty$, one arrives at the inequality~\eqref{equation:abstractLyapunov} with~$q = \operatorname{e}^{-\alpha}$; in particular, this implies \Cref{condition:C1}. 
        The approximate controllability to a distinguished point~$p \in \xM$, namely~\Cref{condition:C2}, and the solid controllability in~\Cref{condition:C3} are inferred from the respective Propositions~\Rref{proposition:Application1_exactcontrollability} and~\Rref{proposition:ressc3} established below. Next,  \Cref{condition:C4} is enforced via \Cref{assumption:Application1_noise}. Thus, in view of \Cref{theorem:main}, we obtain the existence of a unique stationary measure possessing the exponential mixing property~\eqref{equation:definition_exponentialmixing}.
	\end{proof}

	\subsection{A white noise example}\label{subsection:whitenoiseexample} 
	
	In this section, exponential mixing is established for the elasto-plastic system~\eqref{equation:Application1_Model} driven by white noise. That is, we consider the problem \eqref{equation:Application1_Model} with $\zeta = \dot{\beta}$, where $\beta$ is a standard Brownian motion.

	Given any $x = x_0 = (y_0, z_0) \in \xM$, let $\{x_t,\mathbb{P}_x\}_{t \geq 0}$ be the continuous-time Markov family (\cf~\cite[Section~1.3.3]{KS-12}) associated with the RDS~\eqref{equation:Application1_Model} for $\zeta = \dot{\beta}$; the corresponding Markov operators are then denoted as~$\mathfrak{P}_t$ and~$\mathfrak{P}_t^*$. Analogously to the discrete case, a measure~$\mu\in\mathcal{P}(\xM)$ is called stationary for the continuous-time Markov family~$(x_t,\mathbb{P}_x)$ if $\mathfrak{P}_t^*\mu=\mu$ holds for any $t \geq 0$.
	
	\begin{thrm}\label{theorem:main_application_elastoplasticBrownianMotion} 
		Under \Cref{assumption:Application1_noise},
		the family $(x_t,\mathbb{P}_x)$ has a unique stationary  measure $\mu \in \mathcal{P}(\xM)$, and there are positive numbers $\gamma$ and~$C$ such~that
		\[
				\|\mathfrak{P}_t^* \lambda -\mu\|_{\operatorname{var}}\leq 	C\operatorname{e}^{-\gamma t} \langle V, \lambda \rangle_\xM, \quad t \geq 0 
		\] 
		for any $\lambda\in \mathcal{P}(\xM)$ with $\langle V, \lambda  \rangle_\xM < +\infty$, where $V$ denotes the Lyapunov function specified in \eqref{equation:lfa}.
	\end{thrm}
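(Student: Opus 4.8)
The plan is to deduce the continuous-time statement from the discrete-time machinery already developed, by first establishing exponential mixing for the $T_0$-skeleton $\{x_{kT_0}\}_{k\in\mathbb{N}_0}$ and then lifting the estimate to arbitrary times. Concretely, I would fix $T_0 \in (0,1]$ as in \Cref{subsection:TheElastoPlasticModel}, set $\xE \coloneq \xC_0([0,T_0];\mathbb{R})$, and realize the skeleton as the RDS $x_k = S(x_{k-1};\eta_k)$, where $S$ is the resolving operator of \eqref{equation:Application1_Model} with force $\zeta = \dot\eta$ and the i.i.d. variables $\eta_k(t) \coloneq \beta((k-1)T_0+t)-\beta((k-1)T_0)$ are the Brownian increments. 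The goal of the first part is to verify Conditions~\Rref{condition:C1}--\Rref{condition:C4} for this RDS, so that \Cref{theorem:main} applies and yields
\[
	\|\mathfrak{P}_{kT_0}^*\lambda - \mu\|_{\operatorname{var}} \leq C\operatorname{e}^{-\gamma k}\langle V,\lambda\rangle_\xM, \quad k \geq 1,
\]
with the Lyapunov function $V = 1+y^2$ from \eqref{equation:lfa}.

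For the four conditions I would argue as follows. \Cref{condition:C1} is obtained exactly as in the proof of \Cref{theorem:main_application_elastoplastic}, except that the pathwise computation \eqref{equation:YEest} is replaced by It\^o's formula applied to $y^2$: the additional quadratic-variation term contributes only a bounded drift, so \Cref{assumption:Application1_force} and Gr\"onwall's inequality still give $\mathbb{E}_x V(x_{T_0}) \leq q V(x) + A$ with $q = \operatorname{e}^{-\alpha} \in (0,1)$. Conditions~\Rref{condition:C2} and~\Rref{condition:C3} concern only the deterministic control system and are therefore inherited from Propositions~\Rref{proposition:Application1_exactcontrollability} and~\Rref{proposition:ressc3}, the sole point to check being that the admissible controls $\zeta = \dot\eta$ with $\eta \in \xE$ are rich enough to realize the exact and approximate steering used there. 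Finally, \Cref{condition:C4} must be verified for the Wiener measure on $\xE$; here I would take $\{\xF_j\}$ to be the increasing spaces spanned by the first trigonometric modes on $[0,T_0]$, exploit the independence of the Fourier coefficients of Brownian motion to obtain the product structure~(b) and the positive Gaussian densities in~(c), in the spirit of \Cref{Example:ONBRepCondition4}.

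With the discrete estimate in hand, the lifting is then routine. For $t \geq T_0$ I would write $t = kT_0 + s$ with $k = \lfloor t/T_0\rfloor$ and $s \in [0,T_0)$; the Chapman--Kolmogorov relation gives $\mathfrak{P}_t^*\lambda = \mathfrak{P}_{kT_0}^*(\mathfrak{P}_s^*\lambda)$, and since $\mu$ is stationary one may apply the discrete bound with initial law $\mathfrak{P}_s^*\lambda$ to get $\|\mathfrak{P}_t^*\lambda - \mu\|_{\operatorname{var}} \leq C\operatorname{e}^{-\gamma k}\langle V,\mathfrak{P}_s^*\lambda\rangle_\xM$. The uniform moment bound $\langle V,\mathfrak{P}_s^*\lambda\rangle_\xM = \mathbb{E}_\lambda V(x_s) \leq C_1\langle V,\lambda\rangle_\xM$ for $s\in[0,T_0]$ follows from the same It\^o estimate, and $k \geq t/T_0 - 1$ converts $\operatorname{e}^{-\gamma k}$ into $C\operatorname{e}^{-(\gamma/T_0)t}$; the range $t\in[0,T_0)$ is covered trivially since $\|\cdot\|_{\operatorname{var}}\leq 1$ and $\langle V,\lambda\rangle_\xM \geq 1$. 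Uniqueness and continuous-time stationarity of $\mu$ follow from the commutation $\mathfrak{P}_s^*\mathfrak{P}_{T_0}^* = \mathfrak{P}_{T_0}^*\mathfrak{P}_s^*$: for each $s$ the measure $\mathfrak{P}_s^*\mu$ is again $\mathfrak{P}_{T_0}^*$-stationary, hence equals $\mu$, while any continuous-time stationary measure is in particular $\mathfrak{P}_{T_0}^*$-stationary and thus coincides with $\mu$.

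I expect the main obstacle to be the verification of \Cref{condition:C4}-(d) for the Wiener measure, namely the uniform convergence $\sup_{\zeta\in\mathcal{K}_{\xE}}\|\mathsf{P}_{\xF_j}\zeta - \zeta\|_{\xE}\to 0$ of the trigonometric projections on the compact control set $\mathcal{K}_{\xE}$. Because the uniform norm on $\xC_0([0,T_0];\mathbb{R})$ is not controlled by finitely many Fourier modes without some equicontinuity, this step hinges on $\mathcal{K}_{\xE}$ consisting of sufficiently regular paths, as produced by the controllability construction, and it is the place where the interplay between the noise structure and the control set is most delicate. The remaining ingredients, the It\^o Lyapunov estimate and the skeleton-to-flow lifting, are standard once the four conditions are secured.
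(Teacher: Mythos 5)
Your proposal is correct and follows essentially the same route as the paper: verify Conditions~\Rref{condition:C1}--\Rref{condition:C4} for the $T_0$-skeleton (It\^o's formula for the Lyapunov bound, Propositions~\Rref{proposition:Application1_exactcontrollability} and~\Rref{proposition:ressc3} for the controllability conditions, the integrated trigonometric basis for decomposability), then lift to continuous time by writing $t = kT_0 + s$ and using the uniform moment bound on $\mathfrak{P}_s^*\lambda$. Your identification of \Cref{condition:C4}-(d) as the delicate point, resolved by the regularity of $\mathcal{K}_{\xE}$ as a ball in $\xWn{{1,2}}_0([0,T_0];\mathbb{R})$, matches the paper's appeal to~\cite[Lemma~A.1]{Raq-19}.
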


	\begin{proof}
		The idea is to first prove exponential mixing via \Cref{theorem:main} for the system restricted to integer times. Subsequently, this can be generalized to continuous-time by using the Lyapunov structure.
		
		\paragraph{Step 1. Discrete-time.}
		Let $T_0 > 0$ be fixed as explained above \Cref{equation:characterizationT0}, and take $\xE$ as the separable Banach space $\xC_0([0,T_0];\mathbb{R})$ of continuous functions $\eta\colon [0,T_0] \longrightarrow \mathbb{R}$ with $\eta(0) = 0$; further, denote by $S$ the mapping
		\[
				S\colon \xM\times \xE\longrightarrow \xM, \quad  (x_0, \eta)\mapsto x(T_0),   
		\]
		where $x(t)\coloneq (y(t),z(t))$ solves \eqref{equation:Application1_Model} with the initial state $x_0 =(y_0,z_0)$ and driving force $\zeta \coloneq ``\dot{\eta}"$. Then a family of i.i.d. random variables $\{\eta_k\}_{ k \geq 1}$ is defined in $\xE$ via
		\[
				\eta_k(t)\coloneq  \beta((k-1)T_0 +t)-\beta((k-1)T_0), \quad t\in [0,T_0].
		\]
		In particular, given any $k \geq 1$, one has for $x_k\coloneq x(kT_0)$ the relation
		\[
			x_k = S(x_{k-1};\eta_k).
		\]
		In order to prove exponential mixing for the discrete-time Markov family $(x_k, \mathbb{P}_x)$, we need to check Conditions~\Rref{condition:C1}--\Rref{condition:C4}.
		
		Accounting for \eqref{equation:Application1_Model}, \Cref{assumption:Application1_force}, and It$\hat{\text o}$'s formula, then taking the expectation, it follows that
		\begin{equation*}
			 \xdrv{}{t} \mathbb{E}_x  y^2 =\mathbb{E}_x \left (2f(y,z)y+1\right)\leq -2\alpha \mathbb{E}_x y^2+2C+1.
		\end{equation*}
 		An application of Gr\"onwall's inequality then yields
		\begin{equation}\label{equation:c1iebrmo}
			\mathbb{E}_x V(x(t))\leq q^t V(x)+A, \quad t\in [0,T_0]
		\end{equation}
		with
		\[
			q\coloneq \operatorname{e}^{-2\alpha}, \quad A\coloneq (2C+1)/2\alpha.
		\]
		Thus, one obtains \Cref{condition:C1} by taking~$t=T_0$ in~\eqref{equation:c1iebrmo}. To verify~\Cref{condition:C2}, we apply \eqref{proposition:Application1_exactcontrollability} and integrate the obtained control with respect to time. For checking~\Cref{condition:C3}, one can repeat the proof of~\Cref{proposition:ressc3} below, followed by integrating the so-obtained control with respect to time. Moreover, due to~\Cref{proposition:ressc3}, the compact set~$\mathcal{K}\subset \xE$ in~\Cref{condition:C3} can be fixed as any appropriate closed ball in~$\xWn{{1,2}}_0([0,T_0];\mathbb{R})$. 
		 		 
		Finally, \Cref{condition:C4} is verified as in \cite[Appendix A]{Raq-19}. Hereto, we denote by~$\{\phi_j\}_{j \in \mathbb{N}}$ the trigonometric basis in $L^2((0,T_0);\mathbb{R})$; then, we set
		\[
		 	 e_j(t) \coloneq \int_0^t \phi_j(s) \, \xdx{s}, \quad t\in [0,T_0], \quad j \geq 1, \quad \xF_j \coloneq \operatorname{span}\{e_n \,| \, n\leq j \}.
	 	\]
		Since the space $\xWn{{1,2}}_0([0,T_0];\mathbb{R})$ is dense in $\xE$ and admits $\{e_j\}_{j \in \mathbb{N}}$ as an orthonormal basis, one can infer the property (a) in \Cref{condition:C4}. Furthermore, there are independent scalar standard normal random variables $\{\xi_j\}$ with (\cf~\cite[Section~3.5]{Bog-98})
		\[
		 	\beta(t)=\sum_{n=1}^{\infty} \xi_n e_n(t), \quad t \in [0,T_0].
		\]
		The independence of the sums $\sum_{n=1}^j \xi_n e_n$ and $\sum_{n=j+1}^{\infty} \xi_n e_n$ implies the representation~(b) in \Cref{condition:C4}. Property (c) is obvious, and property (d) follows from~\cite[Lemma~A.1]{Raq-19}.

		As a result, we conclude the existence of a unique stationary measure~$\mu \in \mathcal{P}(\xM)$  associated with $\mathfrak{P}_{T_0}^*$ that obeys $\langle V,\mu \rangle_\xM<+\infty$ and satisfies for some fixed positive numbers~$\gamma$ and~$C$ the estimate
		\begin{equation}\label{equation:resstep1sm}
			\|\mathfrak{P}_{kT_0}^* \lambda -\mu\|_{\operatorname{var}}\leq C\operatorname{e}^{-\gamma k} \langle V, \lambda  \rangle_\xM
		\end{equation}
		with arbitrary $k \geq 0$ and any $\lambda\in \mathcal{P}(\xM)$ that obeys $\langle V, \lambda  \rangle_\xM < +\infty$.
		\paragraph{Step 2. Continuous-time.}
		When $t > 0$, the measure $\mu$ in \eqref{equation:resstep1sm} is stationary for~$\mathfrak{P}_t^*$, as well. Indeed, given $t \in [0,T_0]$, the estimate \eqref{equation:c1iebrmo} yields
		\begin{equation}\label{equation:dis-cont}
			\begin{aligned}
				\langle V,\mathfrak{P}_t^*\nu \rangle_\xM & \leq q^t \langle V, \nu   \rangle_\xM + C\\
				  & \leq \langle V, \nu \rangle_\xM + C\langle V, \nu \rangle_\xM \\ 
                    & =  (C+1) \langle V, \nu \rangle_\xM <  +\infty
			\end{aligned}
		\end{equation}
		for any $\nu \in \mathcal{P}(\xM)$ with $\langle V,\nu   \rangle_\xM<+\infty$, where we used that $V\ge1$ in the second line. Thus, we can write $t=kT_0+s$, where $k \geq 0$ is an integer and~$s\in [0,T_0)$, followed by employing \eqref{equation:dis-cont} in order to conclude that~$\langle V, \mathfrak{P}_s^*\mu \rangle_\xM<+\infty$.
		Using \eqref{equation:resstep1sm} with~$\lambda = \mathfrak{P}_s^*\mu$, one can infer $\mathfrak{P}_s^*\mu=\mu$, which implies $\mathfrak{P}_t^*\mu=\mu$ for arbitrary $t \geq 0$.
		
		Finally, by decomposing again $t = kT_0 + s$, then taking any $\nu \in \mathcal{P}(\xM)$ such that $\langle V, \nu \rangle_\xM<+\infty$, 
		applying \eqref{equation:resstep1sm} with $\lambda \coloneq \mathfrak{P}_s^*\nu$, and utilizing the estimate \eqref{equation:dis-cont}, one arrives at
		\begin{align*}
			\|\mathfrak{P}_t^* \nu -\mu\|_{\operatorname{var}} & =\|\mathfrak{P}_{kT_0}^*( \mathfrak{P}_s^*\nu)  -\mu\|_{\operatorname{var}}\\
			& \leq C\operatorname{e}^{-\gamma k} \langle V, \mathfrak{P}_s^*\nu  \rangle_\xM\\
			& \leq (C+1)\operatorname{e}^{-\gamma t}  \langle V,  \nu  \rangle_\xM < + \infty
		\end{align*}
		for all $t \geq 0$.
	\end{proof}

	\subsection{Controllability of deterministic elasto-plasticity} \label{S:2.2}
	In this section, several controllability properties of deterministic dynamical systems associated with \eqref{equation:Application1_Model} are collected. We begin with showing the exact controllability of \eqref{equation:Application1_Model} in arbitrary time~$T>0$ to any target state $x_T \in \mathbb{R}_* \times (0,1)$. The resolving operator at time $t > 0$ for the system~\eqref{equation:Application1_Model} with control $\zeta = u$ is now denoted by
	\begin{gather*}
			S_t = (S_t^y, S_t^z)\colon \xM\times \xLtwo((0,t);\mathbb{R})\longrightarrow \xM, \quad (x_0, u) \mapsto x(t) = (y, z)(t).
	\end{gather*}
	\begin{prpstn}\label{proposition:Application1_exactcontrollability}
		Given any control time $T > 0$, initial data $x_0 = (y_0, z_0) \in \xM$, and target state $x_T = (y_T, z_T) \in \mathbb{R}_* \times (-1,1)$, there exists a control $u \in \xC([0,T];\mathbb{R})$ such that $S_T(x_0, u) = x_T$.
	\end{prpstn}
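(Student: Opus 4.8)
The plan is to explicitly construct a continuous control that steers the system \eqref{equation:Application1_Model} from $x_0$ to $x_T$ in time $T$, exploiting the fact that the $z$-dynamics is a scalar differential inclusion constrained to $[-1,1]$, while the $y$-component is governed by an ODE that can be driven directly through the control $\zeta = u$. The key observation is that, as long as $z(t)$ stays in the open interior $(-1,1)$, the subdifferential $\partial g(z)$ vanishes, so the inclusion $y \in \dot z + \partial g(z)$ reduces to the ODE $\dot z = y$. Thus I would \emph{first} prescribe a smooth target trajectory $z(\cdot)\colon [0,T]\longrightarrow (-1,1)$ with $z(0) = z_0$ and $z(T) = z_T$, chosen to stay strictly inside $(-1,1)$ on $(0,T]$ (if $z_0 = \pm 1$, the trajectory must leave the boundary immediately, which is consistent with the inclusion since at $z = \pm 1$ one has $\partial g(z) = \RR_{\pm}$, allowing $\dot z$ to take the sign that moves $z$ inward). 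Differentiating once then \emph{forces} $y(t) \coloneq \dot z(t)$, so that the $z$-equation is automatically satisfied with the multiplier $\partial g$-term equal to zero throughout the interior phase.

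\emph{Second}, with $y(\cdot)$ now determined as $\dot z(\cdot)$, I would read off the required control from the first equation of \eqref{equation:Application1_Model}: since $\dot y = f(y,z) + u$, simply set
\[
	u(t) \coloneq \dot y(t) - f(y(t), z(t)) = \ddot z(t) - f(\dot z(t), z(t)), \quad t \in [0,T].
\]
To ensure the continuity $u \in \xC([0,T];\RR)$ demanded by the statement, it suffices to take the prescribed profile $z(\cdot)$ to be $\xCtwo$ (so that $\ddot z$ is continuous) and to recall that $f$ is locally Lipschitz, hence continuous; the composition $t \mapsto f(\dot z(t), z(t))$ is then continuous. The boundary matching $y(0) = \dot z(0) = y_0$ is arranged by imposing the initial-velocity condition $\dot z(0) = y_0$ on the interpolating profile, and likewise $\dot z(T) = y_T$; since $y_T \in \RR_*$ and $z_T \in (-1,1)$, a suitable $\xCtwo$ polynomial (or spline) interpolant satisfying the four endpoint constraints $z(0) = z_0$, $\dot z(0) = y_0$, $z(T) = z_T$, $\dot z(T) = y_T$ clearly exists and can be chosen with range contained in $[-1,1]$.

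The \emph{main obstacle} is the constraint $z(t) \in [-1,1]$ together with the non-smooth behaviour of the inclusion at the boundary $z = \pm 1$: one must verify that the constructed profile $z(\cdot)$ can be kept in the interior $(-1,1)$ on the open interval (so that $\partial g(z) = \{0\}$ and the inclusion collapses to a plain ODE), while still honouring any boundary initial data $z_0 = \pm 1$. When $z_0 \in (-1,1)$ this is immediate by continuity, since a $\xCtwo$ interpolant with prescribed endpoint values and slopes can be taken to stay within any neighbourhood of the segment joining $z_0$ and $z_T$; when $z_0 = \pm 1$, one uses that the inclusion permits an inward-pointing velocity at the boundary, so the trajectory enters $(-1,1)$ instantaneously and the verification reduces to the interior case on $(0,T]$. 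Once the profile stays in the interior, uniqueness of solutions to the resulting ODE system guarantees that the pair $(y,z)$ produced by the control $u$ above coincides with the prescribed $(\dot z, z)$, whence $S_T(x_0, u) = (y(T), z(T)) = (y_T, z_T) = x_T$, completing the argument.
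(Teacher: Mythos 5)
Your construction is clean and works for most initial data, but it has a genuine gap precisely at the boundary initial conditions with outward-pointing velocity, i.e.\ $z_0=1$, $y_0>0$ (and symmetrically $z_0=-1$, $y_0<0$), which are admissible since the proposition allows any $x_0\in\mathbb{R}\times[-1,1]$. Your justification for this case --- that $\partial g(\pm1)=\mathbb{R}_\pm$ ``allows $\dot z$ to take the sign that moves $z$ inward'', so the trajectory enters $(-1,1)$ instantaneously --- misreads the inclusion. The resolving operator $S_T$ is single-valued (the problem is well-posed in the sense of \cite{PardouxRascanu-2014}), so you cannot select a branch; and in fact no solution of the inclusion can leave the boundary immediately when $y_0>0$ at $z_0=1$. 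Indeed, if $z(t)<1$ on some interval $(0,\varepsilon)$, then $\partial g(z(t))=\{0\}$ there, hence $\dot z=y$ on $(0,\varepsilon)$, and continuity of $y$ with $y(0)=y_0>0$ forces $z(t)=1+\int_0^t y>1$ for small $t$ --- a contradiction. The unique solution is instead pinned in the plastic phase, $z\equiv 1$ with $\dot z=0\neq y$, for as long as $y$ remains positive. Consequently your identification $y=\dot z$ breaks down at $t=0^+$, and no $\xCtwo$ profile with $z(0)=1$, $\dot z(0)=y_0>0$ can stay in $[-1,1]$. The repair is to prepend a plastic-phase segment: while $z$ is stuck at $1$, the $y$-equation is an unconstrained controlled ODE, so a continuous control can drive $y$ from $y_0$ down through $0$ to a negative value, after which $z$ detaches from the boundary and your interior argument takes over. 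This is exactly the mechanism of Cases 3 and 6 in the paper's proof.

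Aside from this case, your argument is correct and is genuinely different from (and more direct than) the paper's. The paper routes \emph{every} trajectory through the plastic phase --- first hitting $\{z=\pm1\}$, sliding along it to the corner state $(0,1)$, and only then descending to the target via a monotone profile --- which gives a uniform recipe at the cost of a six-case gluing argument. Your approach is a flatness-type construction: prescribe $z$, recover $y=\dot z$ and $u=\ddot z-f(\dot z,z)$, which for $z_0\in(-1,1)$ (or $z_0=\pm1$ with inward or zero initial velocity) handles all data in one stroke, since a $\xCtwo$ interpolant matching the four endpoint constraints can always be bent back into $(-1,1)$ by taking the curvature large enough near $t=0$. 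Once the boundary case above is patched with a plastic-phase detour, your proof is complete; note also that your construction does not actually need the restriction $y_T\neq0$ imposed in the statement.
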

	\begin{proof} The desired control is obtained by gluing together the below-described building blocks in a continuous way (\cf~\Cref{example:elasto-plastic_controlled}). To simplify the presentation, but without loss of generality, it is assumed that $y_T < 0$; when $y_T > 0$, analogous constructions can be employed.

	\paragraph{Case 1. $y_0 = 0, z_0 \in (-1,1)$.} For any $\overline{u} \in \mathbb{R}$ with $f(x_0) + \overline{u} > 0$, there exists $\varepsilon_0 > 0$ so that $S^y_{\varepsilon}(x_0; \overline{u}) > 0$ and $|S^z_{\varepsilon}(x_0; \overline{u})| < 1$ for all $\varepsilon \in (0,\varepsilon_0)$.
		
	\paragraph{Case 2. $y_0 > 0, z_0 \in [-1,1)$.}
	There exists~$\varepsilon_0 > 0$ such that for any   $\varepsilon \in (0, \varepsilon_0)$ there is a control $\widetilde{u}\in \xC([0, \varepsilon];\mathbb{R})$ with
	\begin{equation*}\label{equation:ec_case1goal}
		\begin{gathered}
			S^y_{\varepsilon}(x_0; \widetilde{u}) > 0, \quad S^z_{\varepsilon}(x_0; \widetilde{u}) = 1, \quad | S^z_{t}(x_0; \widetilde{u})| \leq 1,  \quad t \in [0, \varepsilon].
		\end{gathered}
	\end{equation*} 
	Indeed, a desired controlled trajectory $(y,z)(t) = S_t(x_0; \widetilde{u})$ is given by
	\begin{equation*}
		y(t) \coloneq t a +y_0, \quad
		z(t) \coloneq \frac{t^2a}{2} + y_0 t +z_0, \quad 
		\widetilde{u}(t) \coloneq a-f(y(t),z(t)),
	\end{equation*}
	where
	\[
		a \coloneq \frac{2(1-z_0-y_0\varepsilon)}{\varepsilon^2}, \quad 0 < \varepsilon < \varepsilon_0 \coloneq \frac{1-z_0}{y_0}.
	\]

	\paragraph{Case 3. $y_0  \geq 0, z_0 = 1$.} For any $\widetilde{T} > 0$, there exists a control $\widetilde{u} \in \xC([0, \widetilde{T}];\mathbb{R})$ with $S_{\widetilde{T}}(x_0; \widetilde{u}) = (0,1)$. This can be seen by defining
	\begin{equation*}
		y(t) \coloneq  -\frac{y_0t}{\widetilde{T}}  + y_0, \quad z(t) \coloneq 1, \quad \widetilde{u}(t) \coloneq -\frac{y_0}{\widetilde{T}}-f(y(t),z(t)).
	\end{equation*}

	\paragraph{Case 4. $x_0 =(0,-1)$.} Let $\overline{u} \in \mathbb{R}$ with $f(x_0) + \overline{u} > 0$, and select $\varepsilon > 0$ so small that $S^y_\varepsilon(x_0;\overline{u}) > 0$. Together with the analysis of Case 3, for $\widetilde{T} > 0$, this provides a control $\widetilde{u} \in \xC([0, \widetilde{T}];\mathbb{R})$ such that~$S_{\widetilde{T}}(x_0; \widetilde{u}) = (0,1)$.	
		
	\paragraph{Case 5. $y_0 \neq 0, z_0 \in [-1,1]$.} For~$\widetilde{T} > 0$, the previous cases (and similar arguments when $y_0 < 0$) lead to a control $\widetilde{u} \in \xC([0,\widetilde{T}];\mathbb{R})$ such that $S_{\widetilde{T}}(x_0; \widetilde{u}) = (0,1)$. 
		
	\paragraph{Case 6. $x_0 =(0,1)$.} Given any $\widetilde{T} > 0$, take a monotonic $\varphi \in \xCinfty([0,\widetilde{T}];\mathbb{R}_-)$ with
	\begin{gather*}
		\varphi(t) = 0 \iff t = 0, \quad \varphi(\widetilde{T}) = y_T, \quad  \int_0^{\widetilde{T}}\varphi(s) \xdx{s} = z_T - 1.
	\end{gather*}
	Then, to achieve $(y,z)(t) = S_t(x_0; \widetilde{u})$ for $t\in [0,\widetilde{T}]$ and $S_{\widetilde{T}}(x_0; \widetilde{u})=x_T$, we choose
	\begin{equation*}
		y(t) \coloneq \varphi(t), \quad
		z(t) \coloneq \int_0^t \varphi(s) \xdx{s} + 1, \quad
		\widetilde{u}(t) \coloneq \dot y(t)-f(y(t),z(t)).
	\end{equation*}
	
	\end{proof}

	\begin{figure}[ht!]
		\centering
		\resizebox{0.75\textwidth}{!}{
			\begin{tikzpicture}
				\clip(0,1.2) rectangle (8,6.82);
				
				\draw[->, line width=0.2mm, dashed] (0,4) -- (8,4);
				\draw[->, line width=0.2mm, dashed] (4,0.5) -- (4,6.8);
				
				\draw[line width=0.4mm, color=black] (0.76,6.2) -- (8,6.2);
				\draw[line width=0.4mm, color=black] (0,1.8) -- (7,1.8);
				\fill[line width=0pt, color=black, pattern={mylines[size=3pt,line width=0.9pt,angle=72]},
				pattern color=black] (4,6.2) rectangle (8,6.3);
				\fill[line width=0pt, color=black, pattern={mylines[size=2.9pt,line width=0.9pt,angle=72]},
				pattern color=black] (4,1.7) rectangle (0,1.8);
				
				\def \Samples {91};
				\def \Epsilon {1/2};
				\def \Y {3.3}; 
				\def \Z {4.2}; 
				\def \YShift {4};
				\def \ZShift {3.15};
				
				\foreach \i in {1,2,...,\Samples} {
					\coordinate(A\i) at ({\Y - ((2*(1+(\Z - \ZShift) + (\Y-\YShift)*\Epsilon)))*\Epsilon*(\i/(\Samples-1)-1/(\Samples-1))}, {\Z + (\Y-\YShift)*(\i/(\Samples-1)-1/(\Samples-1)) - ((1 + (\Z - \ZShift) + (\Y-\YShift)*\Epsilon))*(\i/(\Samples-1)-1/(\Samples-1))^2});
				}
				
				\draw [line width=0.5pt, color=black, postaction={decorate,decoration={markings,
						mark=at position 0.6 with {\arrow[line width=2.8pt, color=FireBrick!80]{stealth}}}}] plot[domain=1:\Samples, samples=\Samples] (A\x);
					
				\draw [line width=0.5pt, color=black, postaction={decorate,decoration={post length=2pt,
						pre length=8pt,
						markings,
						mark=at position 0.5 with {\arrow[line width=2.8pt, color=FireBrick!80]{stealth}}}}]  plot[smooth, tension=0.2] coordinates {(A91) (4,1.8)};
				
				\draw [line width=0.5pt, color=black, postaction={decorate,decoration={post length=2pt,
						pre length=8pt,
						markings,
						mark=at position 0.7 with {\arrow[line width=2.8pt, color=FireBrick!80]{stealth}}}}]  plot[smooth, tension=1] coordinates {(4,1.8) (4.3,1.98) (4.7,2)};
					
				\draw [line width=0.5pt, color=black, postaction={decorate,decoration={post length=2pt,
						pre length=8pt,
						markings,
						mark=at position 0.35 with {\arrow[line width=2.8pt, color=FireBrick!80]{stealth}},
						mark=at position 0.8 with {\arrow[line width=2.8pt, color=FireBrick!80]{stealth}}}}]  plot[smooth, tension=1] coordinates {(4.7,2) (4,6.2)};
				
				\foreach \i in {1,2,...,\Samples} {
					\coordinate(B\i) at ({4 - (1.3)*(\i/(\Samples-1)-1/(\Samples-1))}, {6.2-(0.5)*(1.3)*(1.5)*(\i/(\Samples-1)-1/(\Samples-1))^2});
				}
				
				\draw [line width=0.5pt, color=black, postaction={decorate,decoration={markings,
						mark=at position 0.65 with {\arrow[line width=2.8pt, color=FireBrick!80]{stealth}}}}] plot[domain=1:91, samples=91] (B\x);

				\fill[color=black, fill=FireBrick!80] plot[smooth cycle] (B91) circle (0.08);
				\fill[color=black, fill=FireBrick!80] plot[smooth cycle] (A1) circle (0.08);
				
				\coordinate[label=below:{\footnotesize$y$}] (y_axis) at (7.9,4);
				\coordinate[label=below:{\footnotesize$z$}] (z_axis) at (3.82,6.89);
				\coordinate[label=right:\footnotesize{$x_0$}] (Start) at (A1);
				\coordinate[label=left:{\footnotesize$x_T$}] (End) at (B91);
				\coordinate[label=below:{\footnotesize$z=1$}] (Bg_Rectangle_1) at (0.35,6.42);
				\coordinate[label=below:{\footnotesize$z=-1$}] (Bg_Rectangle_2) at (7.55,2.02);
			\end{tikzpicture}
		}
		\caption{A schematic sketch of a controlled trajectory~$x = (y, z)$. An initial state~$x_0$ which is, for instance, situated in the second quadrant of the~$yz$-plane, is connected with a prescribed target state~$x_T$ lying northwest of~$x_0$ (\cf~\Cref{example:elasto-plastic_controlled}). The reachable set in the plastic phase is indicated by the shaded parts of the lines~$z=\pm1$. Red arrows emphasize the orientation of~$t \mapsto (y, z)(t)$.}
		\label{Figure:ExampleTrajectory}
	\end{figure}
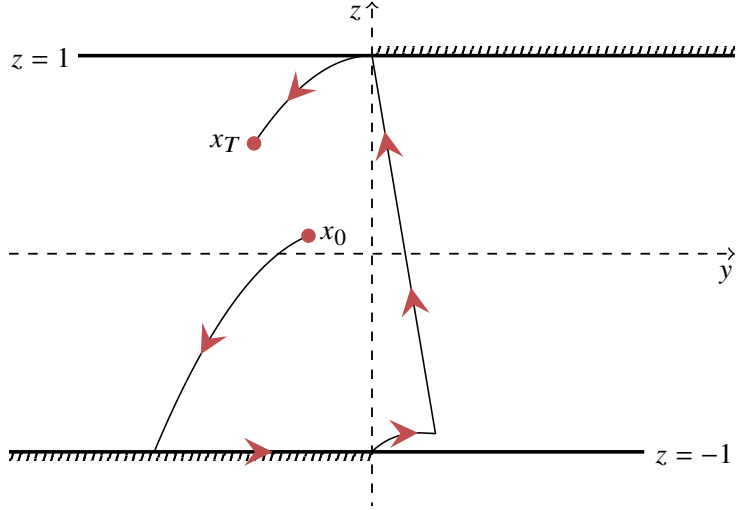
	
	\begin{xmpl}\label{example:elasto-plastic_controlled}
		To illustrate the gluing argument implicitly used in the proof of \Cref{proposition:Application1_exactcontrollability}, let $y_0 < 0$, $z_0 \in (-1,1]$, $y_T < y_0$, and $z_T \in (z_0,1)$. First, in order to connect $x_0 = (y_0, z_0)$ with the line $\{z = -1\}$, one can choose a sufficiently small number $\varepsilon_0 > 0$ and replace $(y,z,\widetilde{u},a)$ in Case~2 by
		\begin{equation*}
			y_1(t) \coloneq y_0 - t a, \quad
			z_1(t) \coloneq -\frac{t^2 a}{2} + y_0 t + z_0, \quad 
			u_1(t) \coloneq a-f(y(t),z(t)),
		\end{equation*}
		where $a = a(\varepsilon) > 0$ is for $\varepsilon \in (0, \varepsilon_0)$ fixed via
		\[
		a \coloneq \frac{2(1+z_0+y_0\varepsilon)}{\varepsilon^2}.
		\]
		Next, we denote a control similar to that from Case~4 with $\widetilde{T} \coloneq T/2$ as $u_2$, the control given by Case~6 for $\widetilde{T} = T/2-\varepsilon$ as $u_3$, and eventually define (\cf~\Cref{Figure:ExampleTrajectory})
		\[
		u(t) \coloneq \begin{cases}
			u_1(t) & \mbox{ if } t \in [0, \varepsilon],\\
			u_2(t) & \mbox{ if } t \in (\varepsilon, T/2+\varepsilon],\\
			u_3(t) & \mbox{ if } t \in (T/2+\varepsilon, T].
		\end{cases}
		\]
	\end{xmpl}
	
	The solid controllability of \eqref{equation:Application1_Model}, as stated in \Cref{condition:C3}, shall be demonstrated next. However, let us first recall a version of an auxiliary lemma which has been established in \cite[Proof of Theorem 2.1, Step 1]{Sh-17}.
	
	\begin{lmm}\label{lemma:continuous_excat_ctrl_implies_solidctrl}
		Let $\xX$ denote a compact metric space and $\xY$ be a separable Banach space.  Moreover, assume the existence of
		\begin{itemize}
			\item two balls $\xG \subset \xY$ and $\varnothing \neq \xF \subset \xX$,
			\item two functions $H \in \xC(\xG; \xX)$ and $h \in \xC(\xF; \xG)$ satisfying $H(h(x)) = x$ for all~$x\in \xF$.
		\end{itemize}
		There are a number $\varepsilon >0$, a nonempty ball $\xB \subset \xX$, and a compact set~$\mathcal{K} \subset \xG$
		such that for each function $\Phi \in \xC(\mathcal{K};\xX)$ that obeys the inequality 
		\[
			\sup_{\zeta \in \mathcal{K}} d_X(\Phi(\zeta),H(\zeta)) \leq \varepsilon
		\]
		one has the inclusion $\xB \subset \Phi(\mathcal{K})$.
	\end{lmm}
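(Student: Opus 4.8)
The plan is to reduce solid controllability to a topological degree (or Brouwer-type) argument. The structure of the hypotheses — a section $h\colon \xF \longrightarrow \xG$ of the map $H\colon \xG \longrightarrow \xX$ satisfying $H\circ h = \operatorname{id}_{\xF}$ — suggests that $H$ restricted to the compact set $\mathcal{K} \coloneq h(\xF)$ is a continuous surjection onto $\xF$ that admits a continuous right inverse. The guiding idea is that a continuous perturbation $\Phi$ of $H$ which is uniformly close to $H$ on $\mathcal{K}$ cannot ``miss'' an interior ball of $\xF$: intuitively, the composition $\Phi \circ h$ stays uniformly close to the identity on $\xF$, and a self-map of a ball that is a small perturbation of the identity must be surjective onto a slightly smaller concentric ball.

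First I would pass to finite dimensions. Since $\mathcal{K} = h(\xF)$ is compact in the Banach space $\xY$ and $\Phi, H$ are continuous, I expect to fix a small enough radius for the balls $\xB \subset \xX$ and $\xF$ so that the relevant maps land in a finite-dimensional affine subspace or can be approximated within $\varepsilon$ by such; this is where separability of $\xY$ and compactness of $\mathcal{K}$ are used. Once everything is finite-dimensional, I would consider the map $G_\Phi \coloneq \Phi \circ h \colon \xF \longrightarrow \xX$. By hypothesis $H \circ h = \operatorname{id}_{\xF}$, so $G_H(x) = x$ for all $x \in \xF$, and the closeness assumption $\sup_{\zeta\in\mathcal{K}} d_{\xX}(\Phi(\zeta), H(\zeta)) \leq \varepsilon$ forces
\[
	\sup_{x \in \xF} d_{\xX}(G_\Phi(x), x) \leq \varepsilon.
\]
Thus $G_\Phi$ is a continuous self-map of (a neighborhood of) $\xF$ that is $\varepsilon$-close to the identity.

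The key step is then a standard degree/homotopy argument: choosing $\xB$ to be a concentric ball in $\xX$ of radius strictly larger than $\varepsilon$ but contained in the interior of $\xF$, I would show via the homotopy $H_s(x) \coloneq (1-s)\, x + s\, G_\Phi(x)$ (or its metric-space analogue, working in local coordinates where $\xX$ is a manifold) that the topological degree of $G_\Phi$ relative to $\xB$ equals that of the identity, namely $\pm 1$, for any target point in $\xB$. Nonvanishing of the degree yields a solution $G_\Phi(x) = b$ for every $b \in \xB$, hence $b = \Phi(h(x)) \in \Phi(\mathcal{K})$, giving the desired inclusion $\xB \subset \Phi(\mathcal{K})$. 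The radius of $\xB$ and the threshold $\varepsilon$ are fixed simultaneously: one first picks $\xB$ inside the interior of $\xF$, then takes $\varepsilon$ smaller than the gap between $\partial \xB$ and $\partial \xF$ so the homotopy never hits $b \in \xB$ on the boundary $\partial \xF$.

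The main obstacle I anticipate is the interface between the Banach-space domain and the manifold target: degree theory is cleanest for maps between Euclidean spaces of equal dimension, so care is needed to reduce $\xX$ to local coordinates (using that $\xB$ can be taken inside a single chart) and to control the finite-dimensional reduction of the domain so that the homotopy argument applies. A secondary technical point is verifying that the homotopy $H_s$ stays away from the relevant boundary uniformly in $s$, which is exactly what the quantitative smallness of $\varepsilon$ relative to the radius of $\xB$ guarantees. Since this lemma is quoted as already established in \cite[Proof of Theorem 2.1, Step 1]{Sh-17}, I would ultimately cite that argument, but the degree-theoretic skeleton above is the proof I would reconstruct.
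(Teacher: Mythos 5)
Your proposal is correct and takes essentially the same route as the source the paper defers to: the paper gives no proof of its own for this lemma but cites \cite[Proof of Theorem 2.1, Step 1]{Sh-17}, where precisely your degree/Brouwer argument is carried out --- $\Phi\circ h$ is $\varepsilon$-close to the identity on the (finite-dimensional, closed) ball $\xF$, so a homotopy to the identity shows its image covers a concentric subball $\xB$ with $\operatorname{dist}(\xB,\partial\xF)>\varepsilon$, and one takes $\mathcal{K}\coloneq h(\xF)$. The only superfluous element is your preliminary ``pass to finite dimensions'' for $\xY$: the degree computation lives entirely on $\xF\subset\xX$ (a ball in the manifold, hence already finite-dimensional and contained in one chart), so no approximation of the noise space is needed --- separability of $\xY$ and compactness of $\mathcal{K}=h(\xF)$ are used only to make $\mathcal{K}$ an admissible compact set for Conditions~\Rref{condition:C3} and~\Rref{condition:C4}, not for the topological argument.
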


	\begin{rmrk}
		The mapping $H$ in \Cref{lemma:continuous_excat_ctrl_implies_solidctrl} shall play the role of $S_{T_0}(x_0, \cdot)$, while the continuous function $h$ produces a suitable control for each admissible target state. In other words, the \enquote{continuous exact controllability} of $H$ implies its \enquote{solid controllability}. Thus, verifying \Cref{condition:C3} essentially reduces to checking the hypotheses of \Cref{lemma:continuous_excat_ctrl_implies_solidctrl}.
	\end{rmrk}

	Recall that $p$ is the point from \Cref{assumption:Application1_force}, while $T_0 \in (0, 1]$ denotes the corresponding reference time fixed in \Cref{subsection:TheElastoPlasticModel}. In particular, if~$p \in \{0\}\times(-1,1)$, then~$T_0=1$.
	
	\begin{prpstn}\label{proposition:ressc3}
		The resolving operator at $t = T_0$ of \eqref{equation:Application1_Model} with control $\zeta = u$, namely
		\begin{gather*}
			S = S_{T_0} \colon \xM\times \xE \longrightarrow \xM, \quad (x_0, u) \mapsto x(T_0) = (y, z)(T_0),
		\end{gather*}
		satisfies \Cref{condition:C3}.
	\end{prpstn}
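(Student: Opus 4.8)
The plan is to work entirely inside the smooth neighborhood $\widetilde{\xB}_{\xM}$ of $p$, where the differential inclusion \eqref{equation:Application1_Model} degenerates into a genuine smooth controlled ODE, and then to derive solid controllability from the continuous exact controllability encoded in \Cref{lemma:continuous_excat_ctrl_implies_solidctrl}. First I would fix $\delta > 0$ and a ball $\widetilde{\xB}_{\xE} \subset \xE$ centered at $0$ so small that, by \eqref{equation:characterizationT0} together with the continuous dependence of solutions on the data, every solution of \eqref{equation:Application1_Model} issued from $x \in \operatorname{clos}_{\xM}\xB_{\xM}(p,\delta)$ with control $u \in \widetilde{\xB}_{\xE}$ stays in $\widetilde{\xB}_{\xM}$ throughout $[0,T_0]$; in particular $S(x;\widetilde{\xB}_{\xE}) \subset \widetilde{\xB}_{\xM}$. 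Since $\widetilde{\xB}_{\xM} \subset \xM \setminus \partial\xM$ forces $z \in (-1,1)$ and hence $\partial g(z) = \{0\}$, the inclusion reduces there to the smooth system $\dot y = f(y,z) + u$, $\dot z = y$, with $f$ infinitely differentiable on $\widetilde{\xB}_{\xM}$ by \Cref{assumption:Application1_force}.

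The regularity requirement (i) of \Cref{condition:C3} then follows from the classical theory of smooth dependence of ODE flows on initial data and parameters: because the control enters affinely and $f$ is $\xCinfty$ near $p$, the map $(x,u) \mapsto S(x;u)$ is Fr\'echet differentiable with derivative $(x,\zeta) \mapsto D_\zeta S(x,\zeta)$ continuous on $\xB_{\xM}(p,\delta) \times \widetilde{\xB}_{\xE}$, and $S(p;\cdot)$ is infinitely Fr\'echet differentiable. The derivative $D_u S(x;u)[v]$ is computed by solving the variational equation $\dot{\delta y} = (\partial_y f)\delta y + (\partial_z f)\delta z + v$, $\dot{\delta z} = \delta y$ with $\delta y(0) = \delta z(0) = 0$ along the reference trajectory and reading off $(\delta y,\delta z)(T_0)$.

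For part (ii), I would verify that this linear variational system is controllable: since $v$ drives $\delta y$ directly and $\delta y$ drives $\delta z$, the attainable set of $(\delta y,\delta z)(T_0)$ over $v \in \xE$ is all of $\mathbb{R}^2 \cong T_{S(p;u)}\xM$ for every $T_0 > 0$. Hence $D_u S(p;u)$ is surjective, and choosing a two-dimensional subspace $\Lambda \subset \xE$ on which the restriction of $D_u S(p;\widehat u)$ (say at $\widehat u = 0$) is an isomorphism, the inverse function theorem yields a smooth right inverse $h$ of $S(p;\cdot)$ on a small ball $\xF \subset \widetilde{\xB}_{\xM}$ around $S(p;\widehat u)$, taking values in a ball $\xG \subset \widetilde{\xB}_{\xE}$; thus $S(p;h(x)) = x$ for $x \in \xF$. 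With $\xX \coloneq \operatorname{clos}_{\xM}\widetilde{\xB}_{\xM}$ (which is compact), $H \coloneq S(p;\cdot)|_{\xG}$, and this $h$, \Cref{lemma:continuous_excat_ctrl_implies_solidctrl} supplies $\epsilon > 0$, a ball $\widehat{\xB}_{\xM} \subset \xX \setminus \partial\xM$, and a compact set $\mathcal{K}_{\xE} \subset \xG \subset \widetilde{\xB}_{\xE}$ for which the implication of \Cref{condition:C3}-(ii) holds, completing the verification.

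I expect the main obstacle to be the reduction carried out in the first paragraph, namely guaranteeing \emph{uniformly} over $\operatorname{clos}_{\xM}\xB_{\xM}(p,\delta)$ and $\widetilde{\xB}_{\xE}$ that the controlled trajectories never touch the plastic boundary $z = \pm 1$ during $[0,T_0]$. This is delicate precisely when $y_p \neq 0$, where $z$ moves at a nonzero rate and the admissible reference time must be taken small, as arranged in \eqref{equation:characterizationT0}; the affine, hence Lipschitz, dependence of the flow on $(x,u)$ is what upgrades the single-trajectory estimate \eqref{equation:characterizationT0} to the required uniform one. By contrast, the full rank of the linearized endpoint map and the construction of the continuous right inverse are then routine consequences of the smooth reduction.
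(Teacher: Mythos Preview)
Your proposal is correct and follows essentially the same route as the paper: reduce to the smooth ODE inside $\widetilde{\xB}_{\xM}$ via the choice of $T_0$ in \eqref{equation:characterizationT0}, read off the regularity in (i) from smooth dependence, establish surjectivity of the linearized endpoint map, produce a continuous local right inverse by the inverse function theorem, and then invoke \Cref{lemma:continuous_excat_ctrl_implies_solidctrl}. The only cosmetic difference is that the paper verifies surjectivity of $\mathscr{R}_{T_0}$ by an explicit construction---choosing a profile $\varphi$ with $\varphi(0)=0$, $\varphi(T_0)=Y_1$, $\int_0^{T_0}\varphi=Z_1$ and reading off the control from the variational equation---whereas you argue more abstractly from the cascade structure and then restrict to a two-dimensional subspace; both are standard and equivalent here.
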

	\begin{proof}
		Due to \Cref{assumption:Application1_force}, there exists an open neighborhood~$\widetilde{\xB}_{\xM} \subset \xM \setminus \partial \xM$ of a point $p \in \xM$ for which the mapping $f\colon \widetilde{\xB}_{\xM} \longrightarrow \mathbb{R}$ in \eqref{equation:Application1_Model} is smooth. 
		\paragraph{Step 1. Regularity.} Due to the choice of $T_0$ at the beginning of \Cref{subsection:dn} (see in particular \eqref{equation:characterizationT0} if $y_p \neq 0$), one can take $\delta_0 > 0$ so small that 
		\begin{itemize}
			\item $\xB_{\xM}(p, \delta) \subset \widetilde{\xB}_{\xM}$,
			\item the restriction of $S = S_{T_0}$ to the set $\xB_{\xM}(p, \delta)\times \xB_{\xE}(0,\delta)$ constitutes a smooth mapping $\xB_{\xM}(p, \delta)\times \xB_{\xE}(0,\delta) \longrightarrow \widetilde{\xB}_{\xM}$
		\end{itemize}
		for any $\delta \in (0,\delta_0]$. From now on, such a number~$\delta\in(0,\delta_0]$ is fixed. 
		
		\paragraph{Step 2. Solid controllability.}
		The property (ii) of \Cref{condition:C3} will follow from an application of \Cref{lemma:continuous_excat_ctrl_implies_solidctrl} with $H = S(p;\cdot)$. Hereto, given any element~$u \in \xB_{\xE}(0,\delta)$, we denote by
		\[
			(y(t), z(t)) = S_t(p;u)
		\]
		the solution at time $t\in [0,T_0]$ of the initial value problem
		\begin{equation}\label{equation:systemproofsolid}
			\dot y = f(y,z) + u, \quad \dot z = y, \quad (y, z)(0) = p. 
		\end{equation}
		To establish the existence of a continuous state-to-control mapping, we fix any reference point $\overline{u} \in \xB_{\xE}(0,\delta)$ and then linearize \eqref{equation:systemproofsolid} about the trajectory
		\[
			(\overline{y}, \overline{z})(t) = S_t(p; \overline{u}).
		\]
		More precisely, given any control~$V\in \xE$, we consider for $t\in [0,T_0]$ the linear problem with vanishing initial states
		\begin{equation}\label{equation:linearizationproofsolid}
			\dot Y = \partial_y f (\overline{y}, \overline{z}) Y + \partial_z f (\overline{y}, \overline{z}) Z + V, \quad
			\dot Z = Y, \quad  (Y, Z)(0) = (0, 0),
		\end{equation}
		and denote by $\{\mathscr{R}_t\}_{t\in [0,T_0]}$ the associated resolving family. This means that, given any~$t \in [0, T_0]$, the linear operator~$\mathscr{R}_t$ maps each $V \in \xE$ to $(Y,Z)(t)$, where $(Y,Z)$ solves \eqref{equation:linearizationproofsolid} with $(Y, Z)(0) = (0, 0)$. In symbols,
		\[
			\mathscr{R}_t(\overline{y}, \overline{z})\colon \xE\longrightarrow \mathbb{R}^2, \quad V \mapsto (Y, Z)(t).
		\]
		In particular, one can show that the map
		\[
			\mathscr{R}_{T_0}(\overline{y}, \overline{z})\colon \xE \longrightarrow \mathbb{R}^2
		\]
		is onto. To see this, we take any target state~$(Y_1, Z_1)\in \mathbb{R}^2$ and choose a smooth function~$\varphi\colon [0,T_0]\longrightarrow \mathbb{R}$ with
		\[
			\varphi(0)=0, \quad \varphi(T_0)= Y_1, \quad  \int_0^{T_0} \varphi(s) \, \xdx{s} = Z_1.
		\]
		Then, owing to the well-posedness of \eqref{equation:linearizationproofsolid}, the profiles 
		\begin{equation*}
			Y(t) \coloneq  \varphi(t), \quad Z(t) \coloneq \int_0^t \varphi(s) \, \xdx{s}
		\end{equation*}
		and control
		\begin{equation}\label{equation:pctrlc}
				V(t) \coloneq  \dot{Y}(t) - \partial_y f(\overline{y}(t), \overline{z}(t))Y(t) - \partial_z f(\overline{y}(t), \overline{z}(t))Z(t)
		\end{equation}
		satisfy
		\[
			(Y(t), Z(t)) = \mathscr{R}_t(\overline{y}, \overline{z}) V, \quad (Y(T_0), Z(T_0)) = (Y_1, Z_1), \quad t \in [0, T_0].
		\]
		Moreover, the inverse function theorem (\cf~\cite[Part 2, Section 3.1.1]{Coron-07}) provides a closed ball $\xF \subset \xM \setminus \partial \xM$ and a continuous mapping $s\colon \xF\longrightarrow \xB_{\xE}(0,\delta)$ such that
		\[
			S(p; s(x)) = x, \quad x\in \xF.
		\]
		Finally, that \Cref{condition:C3} is verified can be seen by an application of \Cref{lemma:continuous_excat_ctrl_implies_solidctrl} with~$\xG = \xB_{\xE}(0,\delta)$, $H(\cdot) = S(p;\cdot)$, and $h = s$.
	\end{proof}

	\appendix
	\gdef\thesection{\Alph{section}}
	\makeatletter
	\renewcommand\@seccntformat[1]{Appendix \csname the#1\endcsname.\hspace{0.5em}}
	\makeatother
	
	\section{Exponential recurrence}\label{section:appendix_exponentialrecurrence}
	In what follows, we suppose that the map~$S$ in~\eqref{equation:generalRDS} satisfies Conditions~\Rref{condition:C1} and~\Rref{condition:C2}; the notations from these conditions are employed below. Let~$\ell$ be the law of~$\eta_1$ from \Cref{subsection:fourconditions}, and, given any~$\delta > 0$, denote the first hitting time of~$\xB_\xM(p, \delta)$ as
	\[
		\tau(p,\delta) \coloneq \tau_{\delta} \coloneq \min\left\{k  \geq 1 \, | \, x_k\in \xB_\xM(p,  \delta)\right\},
	\]
	where $\min\varnothing = +\infty$ by convention. 
	\begin{lmm}\label{lemma:recurrence} 
	Assuming that $\operatorname{supp}\ell=\xE$, one has $\mathbb{P}_x\{\tau_{\delta} < +\infty\}=1$. Moreover, there are numbers $\varkappa>0$ and $C>0$ such~that 
	\[	
		\mathbb{E}_x\operatorname{e}^{\varkappa \tau_{\delta}} \leq  C V(x)
	\]
	for all $x\in \xM$.	
	\end{lmm}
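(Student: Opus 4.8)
The plan is to combine the dissipativity furnished by \Cref{condition:C1} with the approximate controllability of \Cref{condition:C2}, the latter promoted to a probabilistic statement via the full-support hypothesis $\operatorname{supp}\ell=\xE$. First I would record the standard consequence of the Lyapunov inequality \eqref{equation:abstractLyapunov}: fixing $q'\in(q,1)$ and $R_0:=A/(q'-q)$, the sublevel set $K:=\{V\le R_0\}$ is compact (compact level sets) and satisfies $\mathbb{E}_x V(x_1)\le q'V(x)$ for every $x\notin K$. Writing $\tau_K:=\min\{k\ge1:x_k\in K\}$ and $\beta:=-\log q'>0$, the stopped sequence $e^{\beta(k\wedge\tau_K)}V(x_{k\wedge\tau_K})$ is then a supermartingale, and since $V\ge1$, Fatou's lemma yields $\mathbb{E}_x e^{\beta\tau_K}\le CV(x)$ for all $x\in\xM$ (the starting points $x\in K$ being handled by conditioning on the first step). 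Thus the process recurs to the compact set $K$ with an exponentially integrable hitting time.

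Next I would derive a uniform lower bound on the probability of reaching the target ball from $K$ within a bounded number of steps. Enlarging $R_0$ so that $\xB_\xM(p,\delta)\subset K$ and fixing $R>0$ with $K\subset\operatorname{clos}_\xM\xB_\xM(p,R)$, \Cref{condition:C2} supplies a single time $n_0:=n(R,\delta)$ such that each $x\in K$ admits controls $\zeta_1^x,\dots,\zeta_{n_0}^x\in\xE$ with $S_{n_0}(x;\zeta_1^x,\dots,\zeta_{n_0}^x)\in\xB_\xM(p,\delta)$. Because $\xB_\xM(p,\delta)$ is open and $S_{n_0}$ is jointly continuous in the state and the controls, each such configuration persists on an open neighbourhood $U_x\times O_x\subset\xM\times\xE^{n_0}$ of $(x,(\zeta_i^x)_i)$, and $\operatorname{supp}\ell=\xE$ forces $\ell^{\otimes n_0}(O_x)>0$. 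Covering the compact set $K$ by finitely many $U_x$ and taking the least of the corresponding probabilities produces $p_0>0$ with $\mathbb{P}_x\{\tau_\delta\le n_0\}\ge\mathbb{P}_x\{x_{n_0}\in\xB_\xM(p,\delta)\}\ge p_0$ for all $x\in K$. I expect this step to be the main obstacle, since it is precisely where the deterministic, $x$-dependent controls of \Cref{condition:C2} must be upgraded to a genuinely uniform positive probability through the interplay of continuity of $S$, openness of the target, the support condition, and compactness of $K$.

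Finally I would interleave the two ingredients along a renewal scheme. Setting $\nu_1:=\min\{k\ge0:x_k\in K\}$ and $\nu_{j+1}:=\min\{k\ge\nu_j+n_0:x_k\in K\}$, the strong Markov property together with the first step and the estimate $\mathbb{E}[V(x_{\nu_j+n_0})\mid\mathcal F_{\nu_j}]\le q^{n_0}R_0+A/(1-q)$ gives geometric growth $\mathbb{E}_x e^{\beta\nu_j}\le CV(x)\,C_2^{\,j-1}$ with $C_2$ independent of $x$. Meanwhile each visit to $K$ at time $\nu_j$ launches a fresh $n_0$-step attempt that reaches $\xB_\xM(p,\delta)$ with conditional probability at least $p_0$, whence $\mathbb{P}_x\{\tau_\delta>\nu_j+n_0\}\le(1-p_0)^{\,j}$. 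Choosing $\varkappa>0$ small enough that $R\varkappa\le\beta$ and the resulting series converges, and applying H\"older's inequality exactly as in the proof of \Cref{lemma:random_time}, I would conclude $\mathbb{E}_x e^{\varkappa\tau_\delta}\le CV(x)$ for all $x\in\xM$; the almost sure finiteness $\mathbb{P}_x\{\tau_\delta<+\infty\}=1$ is then immediate from the finiteness of this exponential moment.
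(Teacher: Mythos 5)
Your proposal is correct and follows essentially the same route as the paper: both reduce the claim to (i) an exponential moment bound for the return time to a compact sublevel set of $V$, derived from \Cref{condition:C1}, and (ii) a uniform positive lower bound on the probability of entering $\xB_{\xM}(p,\delta)$ in a fixed number of steps from that set, obtained from \Cref{condition:C2} together with $\operatorname{supp}\ell=\xE$, continuity of $S_n$, and compactness, before combining the two via a renewal argument with H\"older's inequality. The only cosmetic differences are that you derive (i) by a supermartingale argument where the paper uses a weighted summation of the Lyapunov inequality plus Chebyshev, and that you spell out the final renewal step which the paper delegates to the stopping-time argument of \cite[Section~3.3.2]{KS-12}.
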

	\begin{proof}
		Given any $R>0$, we denote the set $\xK_R \coloneq \{x\in \xM\,\,|\,\, V(x)\leq R\}$, which is compact due to \Cref{condition:C1}, and consider its first hitting time 
		\[
				T_R\coloneq \min\{k \geq 1\,\,|\,\, x_k\in \xK_R\}. 	
		\]
		Then, by literally repeating the stopping times argument from \cite[Section~3.3.2]{KS-12}, the proof of \Cref{lemma:recurrence} reduces to showing the following two properties.
		\begin{itemize}
			\item[($\alpha$)] There are $R, \varkappa_1, C > 0$ such that $\mathbb{E}_x\operatorname{e}^{\varkappa_1T_R} \leq C V(x)$ for all $x\in \xM$.
			\item[($\beta$)] There exist $n \in \mathbb{N}$ and $r\in (0,1)$ with $	\mathbb{P}_x\{ x_n\in \xB_\xM(p, {\delta})\} \geq r$ for all $x \in \xK_R$.
		\end{itemize}
		
		\paragraph{Step 1. Verification of ($\alpha$).}
		Thanks to the Lyapunov type inequality \eqref{equation:abstractLyapunov} ensured by \Cref{condition:C1}, it follows that
		\begin{equation}\label{equation:apptmp1}
			\mathbb{E}_x V(x_j) \leq q \mathbb{E}_x V(x_{j-1})+A, \quad j \geq 1 
		\end{equation} 
		for some $q \in (0,1)$ and $A > 0$. Let us choose any   $\gamma\in (1,1/q)$, multiply~\eqref{equation:apptmp1} by~$\gamma^j$, and sum up the resulting inequalities for $j=1,\ldots, k$; this yields
		\begin{equation}\label{equation:apptmp2}
			\sum_{j=1}^k  \gamma^j \mathbb{E}_x V(x_j)\leq q \gamma \sum_{j=0}^{k-1}  \gamma^j \mathbb{E}_x V(x_j)+A\sum_{j=1}^k \gamma^j, \quad k \geq 1. 
		\end{equation}
		Using the fact that $V \geq 1$,	the inequality \eqref{equation:apptmp2} can be rewritten as   
		\begin{equation}\label{equation:apptmp3}
			\mathbb{E}_x \sum_{j=1}^k \gamma^j \left( (1-q\gamma) V(x_j) -A\right) \leq q \gamma V(x), \quad k \geq 1.
		\end{equation}
		By choosing $R > A/(1-q\gamma)$, resorting to the Chebyshev inequality, and using the estimate in \eqref{equation:apptmp3}, one then obtains for any $k \geq 1$ that
		\begin{equation}\label{equation:apptmp4}
			\begin{aligned}
				\mathbb{P}_x\{T_R>k\} & \leq \mathbb{P}_x\{V(x_j)> R, \,\,j=1,\ldots,k\}\\
				& \leq \frac{q(\gamma-1)V(x)}{((1-q\gamma)R-A)(\gamma^k-1)} \\
				& \leq C\gamma^{-k} V(x).
			\end{aligned}
		\end{equation}
		Thus, resorting to the Borel--Cantelli lemma, one finds that~$\mathbb{P}_x\{T_R<+\infty\} = 1$ for all~$x\in \xM$. Furthermore, from \eqref{equation:apptmp4} it can be derived that
		\begin{align*}
			\mathbb{E}_x\operatorname{e}^{\varkappa_1T_R} &\leq 1+ \operatorname{e}^{\varkappa_1}+\sum_{k=1}^{\infty}  \mathbb{E}_x \left\{\operatorname{e}^{\varkappa_1T_R} \mathbb{I}_{\{k < T_R \leq k+1\}}\right\}\\&\leq   1+ \operatorname{e}^{\varkappa_1}+\sum_{k=1}^{\infty} \operatorname{e}^{\varkappa_1(k+1)} \mathbb{P}_x \left\{ T_R>k \right\} \\&\leq   1+ \operatorname{e}^{\varkappa_1} + C V(x) \sum_{k=1}^{\infty} \operatorname{e}^{\varkappa_1(k+1)} \gamma^{-k}. 
		\end{align*}
		Therefore, by taking $\varkappa_1 < \log \gamma$, we arrive at the desired inequality.
		
		\paragraph{Step 2. Verification of ($\beta$).} 
		Let us fix $R'>0$ so large that $\xK_R\subset \xB_\xM(p, R')$. Then, by \Cref{condition:C2}, there exist $n = n(R', \delta) \in \mathbb{N}$ and vectors $\zeta_1, \dots, \zeta_n \in \xE$ such that
		\begin{equation}\label{equation:apptmp5}
			S_n(x;\zeta_1, \ldots,\zeta_n)\in  \xB_\xM(p, \delta) 
		\end{equation}
		holds for all $x\in \xB_\xM(p, R')$. Furthermore, recalling that $\eta_1, \ldots,\eta_n$ are i.i.d, and their law obeys $\operatorname{supp}\ell = \xE$ by assumption, one has
		\begin{equation}\label{equation:apptmp6}
			\begin{aligned}
				r_{\varepsilon} & \coloneq \mathbb{P}\{\eta_1 \in \xB_\xE(\zeta_1,\varepsilon), \ldots, \eta_n \in \xB_\xE(\zeta_n,\varepsilon)\} \\
				& \, \, = \prod_{j=1}^n\mathbb{P}\left\{\eta_j \in \xB_\xE(\zeta_j,\varepsilon)\right\}>0
			\end{aligned}
		\end{equation}
		for arbitrary $\varepsilon > 0$. Therefore, since the mapping $S_n\colon \xM\times \xE^n\longrightarrow \xM$ is continuous, one can conclude with the help of \eqref{equation:apptmp5} and \eqref{equation:apptmp6} the estimate
		\[
			\mathbb{P}_y\{ x_n\in  \xB_\xM(p, \delta)\} \geq  r_{\varepsilon} 
		\]
		for all $y \in \xB_\xM(x,\varepsilon)$ and a sufficiently small choice of $\varepsilon > 0$. The property ($\beta$) is then deduced from the compactness of $\xK_R$.
	\end{proof}
	
	Similar exponential recurrence property holds for the coupling process $\{(\widetilde x_k,\widetilde x_k')\}$ constructed in \Cref{subsubsection:cs}. 
	\begin{lmm}\label{lemma:recurrence2}
		Under the conditions of \Cref{theorem:main}, given any $\delta>0$ and the hitting time
		\[
			\widetilde{\tau}_{\delta} \coloneq \min\{k \geq 1 \,| \,  \widetilde x_k,\widetilde x_k'\in \xB_\xM(p, \delta)\}, 
		\]
		there are constants $\varkappa > 0$ and $C>0$ such that
		\[
			\mathbb{E}_{(x,x')}\operatorname{e}^{\varkappa \widetilde{\tau}_{\delta}} \leq  C(V(x)+V(x'))
		\]
		 for all $x, x'\in \xM$.
	\end{lmm}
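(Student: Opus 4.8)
The plan is to recognize $\{(\widetilde x_k,\widetilde x_k')\}_{k\in\mathbb N}$ as a Markov chain on the product space $\xM\times\xM$ and to replay the proof of \Cref{lemma:recurrence} verbatim, now for this chain with the target set $\xB_\xM(p,\delta)\times\xB_\xM(p,\delta)$. First I would reduce to small $\delta$: since $\delta_1\le\delta_2$ gives $\xB_\xM(p,\delta_1)\subset\xB_\xM(p,\delta_2)$ and hence $\widetilde\tau_{\delta_2}\le\widetilde\tau_{\delta_1}$, one has $\mathbb E_{(x,x')}\operatorname{e}^{\varkappa\widetilde\tau_{\delta_2}}\le\mathbb E_{(x,x')}\operatorname{e}^{\varkappa\widetilde\tau_{\delta_1}}$, so it suffices to treat $\delta<\widehat\delta$, with $\widehat\delta$ and $r$ as in \Cref{lemma:transition_bound}. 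As in the proof of \Cref{lemma:recurrence}, the stopping-time machinery of \cite[Section~3.3.2]{KS-12} reduces the statement to two ingredients: an exponential recurrence to a large compact set (driven by a Lyapunov structure), and a uniform lower bound on the probability of reaching the target set from that compact set in a bounded number of steps.

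For the Lyapunov structure I would use $\widetilde V(x,x')\coloneq V(x)+V(x')$, where $V$ is furnished by \Cref{condition:C1}. This $\widetilde V$ is continuous and, because $V\ge1$, its level sets satisfy $\{\widetilde V\le R\}\subset\{V\le R-1\}\times\{V\le R-1\}$ and are therefore compact. The decisive point is that the coupling construction of \Cref{subsubsection:cs} preserves the one-step marginals: irrespective of which of the three cases applies, $\widetilde x_1$ has law $P_1(x,\cdot)$ and $\widetilde x_1'$ has law $P_1(x',\cdot)$. Consequently,
\[
\mathbb E_{(x,x')}\widetilde V(\widetilde x_1,\widetilde x_1')=\mathbb E_x V(x_1)+\mathbb E_{x'}V(x_1')\le q\bigl(V(x)+V(x')\bigr)+2A = q\,\widetilde V(x,x')+2A,
\]
so $\widetilde V$ is a Lyapunov function for the coupled chain, with the same $q\in(0,1)$ from \eqref{equation:abstractLyapunov}. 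Verbatim repetition of Step~1 of \Cref{lemma:recurrence} then yields $\varkappa_1,C>0$ with $\mathbb E_{(x,x')}\operatorname{e}^{\varkappa_1\widetilde T_R}\le C\,\widetilde V(x,x')$, where $\widetilde T_R$ denotes the first entry time of the coupled chain into $\widetilde{\xK}_R\coloneq\{\widetilde V\le R\}$.

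The remaining, and most delicate, ingredient is the accessibility analogue of property~($\beta$): there should exist $n\in\mathbb N$ and $\rho>0$ such that
\[
\mathbb P_{(x,x')}\bigl\{(\widetilde x_n,\widetilde x_n')\in \xB_\xM(p,\delta)\times\xB_\xM(p,\delta)\bigr\}\ge \rho, \quad (x,x')\in\widetilde{\xK}_R.
\]
Here the state-dependent coupling is the crux, since the transition law of the pair differs across the three regimes of \Cref{subsubsection:cs}. I would fix $R'$ with $\widetilde{\xK}_R\subset\xB_\xM(p,R')\times\xB_\xM(p,R')$ and invoke \Cref{condition:C2} with radius $R'$ to obtain a \emph{uniform} horizon $n=n(R',\delta)$ together with, for each starting point in $\operatorname{clos}_{\xM}\xB_\xM(p,R')$, controls steering it into $\xB_\xM(p,\delta)$ in $n$ steps. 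Under the conditions of \Cref{theorem:main} one has $\operatorname{supp}\ell=\xE$ (positive densities on the $\xF_j$ in \Cref{condition:C4}-(c) together with the density of $\cup_j\xF_j$), so that each prescribed control is realized up to arbitrarily small error with positive probability and \Cref{lemma:recurrence} is available for each marginal. The lower bound then proceeds by a case distinction on the regime at the starting pair: when $x=x'$ the components move synchronously and the single-point estimate of \Cref{lemma:recurrence} applies directly; when $x\ne x'$ one drives the two components with \emph{independent} noise, so that the joint probability factorizes into a product of two positive single-component probabilities, provided the chain stays in the independent regime throughout. The hard part is precisely to control the regime: one must either keep the coupled chain in the independent regime until the final step, or deliberately route it through the maximal-coupling regime of \Cref{lemma:transition_bound} to force coincidence of the two components (after which they move synchronously and the single-point estimate finishes the job). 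Since $\delta<\widehat\delta$, this can be arranged by choosing controlled trajectories that remain distinct and are not simultaneously inside $\xB_\xM(p,\widehat\delta)$ before the final step, using the freedom in the horizon $n$ and the continuity of $S_n$ to keep the realized trajectories close to the controlled ones; uniformity of $\rho$ over $\widetilde{\xK}_R$ then follows from compactness exactly as in the verification of property~($\beta$). With the ($\alpha$)- and ($\beta$)-analogues in hand, the asserted bound $\mathbb E_{(x,x')}\operatorname{e}^{\varkappa\widetilde\tau_\delta}\le C(V(x)+V(x'))$ follows by the same stopping-time argument as in \Cref{lemma:recurrence}.
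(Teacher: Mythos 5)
Your route is the same as the paper's: the paper's own proof is only a sketch that reduces the claim to the analogues $(\widetilde\alpha)$ and $(\widetilde\beta)$ of the two properties from \Cref{lemma:recurrence}, notes that $V(x)+V(x')$ is a Lyapunov function for the coupled chain, observes that \Cref{condition:C4} forces $\operatorname{supp}\ell=\xE$, and then simply asserts that $(\widetilde\beta)$ is obtained \enquote{as in Step~2, using the construction of the coupled process}. Your treatment of $(\widetilde\alpha)$ --- the preservation of the one-step marginals in all three coupling regimes, hence the drift inequality for $V(x)+V(x')$, together with the compactness of its level sets --- is correct and is exactly the paper's argument, as is your reduction to small $\delta$.

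The gap is in your mechanism for $(\widetilde\beta)$. You rightly flag that the pair's transition law depends on the coupling regime, so the product bound is only available while the pair evolves with independent noises; but the fix you lean on, namely \enquote{choosing controlled trajectories that remain distinct and are not simultaneously inside $\xB_{\xM}(p,\widehat\delta)$ before the final step}, is not something \Cref{condition:C2} delivers: that condition is a pure existence statement for controls reaching $\xB_{\xM}(p,\delta)$ and gives no handle on the intermediate states of the controlled trajectory, nor does proximity of the realized noise to the control prevent the random pair from entering $\operatorname{clos}_{\xM}\xB_{\xM}(p,\widehat\delta)\times\operatorname{clos}_{\xM}\xB_{\xM}(p,\widehat\delta)$ or the diagonal before the final step. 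The workable version is the alternative you mention only in passing: up to the first entrance time of the pair into the diagonal or into $\operatorname{clos}_{\xM}\xB_{\xM}(p,\widehat\delta)\times\operatorname{clos}_{\xM}\xB_{\xM}(p,\widehat\delta)$, the pair has the law of two independent copies, so the product of the two single-chain bounds controls that entrance time; and both continuation regimes --- synchronous motion after coalescence, and maximal coupling, which coalesces with probability at least $1-r$ --- again preserve the marginals and reduce matters to the single-chain estimate started from a state that the Lyapunov bound keeps inside a compact set with uniformly positive probability. Making this precise while keeping the horizon $n$ deterministic, as $(\widetilde\beta)$ requires, takes an actual argument rather than an appeal to compactness; to be fair, the paper's sketch suppresses exactly the same point.
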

  	\begin{proof}[Sketch of the proof]
			The proof repeats the arguments of the previous lemma in the case of the process~$\{(\widetilde x_k,\widetilde x_k)\}_{k \in \mathbb{N}}$. For any $R>0$, let 
			$$
			\xK_R \coloneq \{(x,x')\in \xM\times\xM \,\,|\,\, V(x)+V(x')\leq R\},
			$$   and consider the first hitting time 
		\[
				T_R\coloneq \min\{k \geq 1\,\,|\,\, (\widetilde x_k,\widetilde x_k')\in \xK_R\}. 	
		\]
		The result will be proved if the following properties are established:
		\begin{itemize}
			\item[($\widetilde{\alpha}$)] there are $R, \varkappa_1, C > 0$ such that $\mathbb{E}_{(x,x')}\operatorname{e}^{\varkappa_1T_R} \leq C (V(x)+V(x'))$ for all $x,x'\in \xM$;
			\item[($\widetilde{\beta}$)] there exist $n \in \mathbb{N}$ and $r\in (0,1)$ with $	\mathbb{P}_{(x,x')}\{  \widetilde x_n,\widetilde x_n\in \xB_\xM(p, {\delta})\} \geq r$ for all $(x,x') \in \xK_R$.
		\end{itemize}
			 Property ($\widetilde{\alpha}$) is checked by repeating the arguments in Step~1 of \Cref{lemma:recurrence}'s proof, using that  $V(x)+V(x')$ is a Lyapunov function for the process $\{(\widetilde{x}_k,\widetilde{x}_k')\}_{k \in \mathbb{N}}$. Condition~\Rref{condition:C4} garantees that $\operatorname{supp}\ell=\xE$. Then property ($\widetilde{\beta}$) is obtained as in Step~2 of \Cref{lemma:recurrence}, using the construction of $\{(\widetilde{x}_k, \widetilde{x}_k')\}_{k \in \mathbb{N}}$.
	\end{proof}
	
	\section{Measure transformation theorem}\label{section:appendix_measuretransformation}
	Given a compact metric space $\xX$, a separable Banach space $\xE$, and a smooth Riemannian manifold $\xM$ as in \Cref{subsection:fourconditions}, let
	\[
		F\colon \xX\times \xE \longrightarrow \xM, \quad (x, \eta) \mapsto F(x, \eta)
	\]
	be a continuous mapping. The following theorem is a consequence of   Theorem~2.4 in~\cite{Sh-07}; see also~\cite[Chapter~10]{Bog-10} for related results. 
	\begin{thrm}\label{theorem:measuretransformation}
		Let $p\in \xX$, $\zeta \in \xE$, and~$\delta>0$ be chosen such that the following conditions are satisfied.
		\begin{enumerate}
			\item $F(p, \zeta)\in \xM\setminus \partial \xM$,
			\item $F(x,\,\cdot\,)\colon {\xB}_\xE(\zeta, \delta)\longrightarrow \xM$ is Fr\'echet differentiable for any $x\in {\xB}_\xX(p, \delta)$,
			\item $D_{\eta} F$ is continuous on~${\xB}_\xX(p, \delta)\times {\xB}_\xE(\zeta, \delta)$,
			\item the image of the linear mapping $(D_{\eta} F)(p, \zeta)$ has full rank,
			\item $\ell \in\mathcal{P}(\xE)$ is a measure obeying the properties in \Cref{condition:C4}.
		\end{enumerate}
		Then there is a number~$\widehat{\delta} > 0$ and a continuous function $\psi\colon \xB_\xX(p,\widehat{\delta})\times \xM\longrightarrow\mathbb{R}_+$ satisfying for $x\in \xB_\xX(p,\widehat{\delta})$ the relations
		\begin{equation*}
			\psi(p, F(p, \zeta)) > 0, \quad 
			\left(F(x,\,\cdot\,)_*\ell\right)(\xdx{y})  \geq \psi(x, y)\,\operatorname{vol}_\xM(\xdx{y}), 
		\end{equation*}
		where $\operatorname{vol}_\xM(\,\cdot\,)$ denotes the Riemannian measure on~$\xM$ and $F(x,\,\cdot\,)_*\ell$ stands for the image of $\ell$ under the mapping $\eta\mapsto F(x,\eta)$.
	\end{thrm}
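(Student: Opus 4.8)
The plan is to combine the product (decomposability) structure of $\ell$ furnished by \Cref{condition:C4} with the full-rank hypothesis to reduce the statement to a finite-dimensional change-of-variables estimate; the outcome can then be packaged as the consequence of Theorem~2.4 in \cite{Sh-07} that the paper advertises. Write $q_0 \coloneq F(p,\zeta)$ and $m \coloneq \dim \xM$; since $q_0 \in \xM\setminus\partial\xM$, one may work in a chart around $q_0$ in which $\operatorname{vol}_\xM$ is comparable to Lebesgue measure. By hypothesis~4 the bounded operator $(D_\eta F)(p,\zeta)\colon \xE \to T_{q_0}\xM$ is surjective, so there are $v_1,\dots,v_m \in \xE$ whose images form a basis of $T_{q_0}\xM$. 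Using the density of $\cup_j \xF_j$ from \Cref{condition:C4}-(a) together with the boundedness of $(D_\eta F)(p,\zeta)$, I would first approximate the $v_i$ by elements of some $\xF_j$ and fix $j$ large enough that the images of the approximants still span $T_{q_0}\xM$; thus one may assume $v_1,\dots,v_m \in \xF_j$.

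By \Cref{condition:C4}-(b) we then have the splitting $\xE = \xF_j \oplus \xH_j$ and $\ell = [(\mathsf{P}_{\xF_j})_*\ell]\otimes[(\mathsf{P}_{\xH_j})_*\ell]$, where by \Cref{condition:C4}-(c) the $\xF_j$-marginal has a positive continuous Lebesgue density~$\rho$. Setting $u_0 \coloneq \mathsf{P}_{\xF_j}\zeta$ and $h_0 \coloneq \mathsf{P}_{\xH_j}\zeta$, I would consider, for $x$ near $p$ and $h$ near $h_0$, the finite-dimensional slice map $\Psi_{x,h}\colon u \mapsto F(x, u+h)$ on a ball around $u_0$ in $\xF_j$. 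Its differential at $u$ is the restriction of $(D_\eta F)(x,u+h)$ to $\xF_j$, which by the continuity in hypothesis~3 stays close to $(D_\eta F)(p,\zeta)|_{\xF_j}$; the latter is surjective because $v_1,\dots,v_m\in\xF_j$. Since surjectivity is an open condition, $\Psi_{x,h}$ is a submersion near $u_0$, uniformly for $(x,h)$ in suitable neighborhoods.

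Next I would push $\rho$ forward by $\Psi_{x,h}$. By the constant-rank theorem and the coarea formula (a routine finite-dimensional computation, straightening $\Psi_{x,h}$ to a coordinate projection), $(\Psi_{x,h})_*(\rho\,\xdx{u})$ admits a continuous density on a fixed neighborhood $W$ of $q_0$ that is bounded below by a positive constant $c$, uniformly for $x$ in some ball $\xB_\xX(p,\widehat\delta)$ and $h$ in a neighborhood $N$ of $h_0$; positivity is inherited from $\rho(u_0)>0$. Disintegrating $F(x,\cdot)_*\ell = \int_{\xH_j} (\Psi_{x,h})_*[(\mathsf{P}_{\xF_j})_*\ell]\,[(\mathsf{P}_{\xH_j})_*\ell](\xdx{h})$ and retaining only $h \in N$, I would obtain
\[
  (F(x,\cdot)_*\ell)(\xdx{y}) \;\geq\; c\,[(\mathsf{P}_{\xH_j})_*\ell](N)\,\mathbb{I}_W(y)\,\operatorname{vol}_\xM(\xdx{y}).
\]
Taking $\psi(x,y) \coloneq c\,[(\mathsf{P}_{\xH_j})_*\ell](N)\,\chi(y)$ with a continuous bump $\chi$ satisfying $\chi(q_0)=1$ and $\operatorname{supp}\chi\subset W$ then yields a continuous $\psi$ with $\psi(p,q_0)>0$ and the asserted lower bound.

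The main obstacle is ensuring the right-hand side is genuinely positive, i.e. $[(\mathsf{P}_{\xH_j})_*\ell](N)>0$: the submersion estimate is only available for $\eta=u+h$ close to $\zeta$, so the slice parameter $h$ must be confined near $h_0$, and one needs $h_0\in\operatorname{supp}(\mathsf{P}_{\xH_j})_*\ell$. This holds in the applications of interest — in \Cref{lemma:transition_bound} the point $\widehat\zeta$ is chosen inside $\xF_j$, so $h_0=0$ lies in the support of the tail marginal — and it is precisely at this point that \cite{Sh-07} is invoked. The remaining tasks, namely the uniformity of the submersion and of the lower bound for $x$ near $p$ and the continuity of $\psi$, are handled by the continuity of $D_\eta F$ together with standard compactness arguments.
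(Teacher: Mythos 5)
The paper does not actually prove this statement: \Cref{theorem:measuretransformation} is imported wholesale as a consequence of Theorem~2.4 in \cite{Sh-07}, so there is no in-paper argument to compare yours against. That said, your reconstruction follows the same strategy as the cited source --- use \Cref{condition:C4} to split off a finite-dimensional factor $\xF_j$ on which $(D_\eta F)(p,\zeta)$ remains surjective, push the positive continuous density of $(\mathsf{P}_{\xF_j})_*\ell$ forward through the resulting submersion, and integrate out the tail --- and the outline is the right one.

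The one genuine gap is the one you flag yourself: your lower bound is vacuous unless $[(\mathsf{P}_{\xH_j})_*\ell](N)>0$ for a neighborhood $N$ of $h_0=\mathsf{P}_{\xH_j}\zeta$, and this does not follow from \Cref{condition:C4} by inspection; deferring it to the citation, or to the special circumstance that $\widehat\zeta\in\xF_j$ in \Cref{lemma:transition_bound}, does not prove the theorem as stated (which allows arbitrary $\zeta\in\xE$, and in any case $h_0=0$ still requires $0\in\operatorname{supp}(\mathsf{P}_{\xH_j})_*\ell$). The clean way to close it is to first establish $\operatorname{supp}\ell=\xE$ --- the paper asserts exactly this consequence of \Cref{condition:C4} in the sketch of \Cref{lemma:recurrence2} --- since then, $\mathsf{P}_{\xH_j}$ being continuous and surjective onto $\xH_j$, one gets $\operatorname{supp}\bigl((\mathsf{P}_{\xH_j})_*\ell\bigr)\supseteq\operatorname{clos}\bigl(\mathsf{P}_{\xH_j}(\operatorname{supp}\ell)\bigr)=\xH_j$, and every neighborhood of $h_0$ carries positive tail mass for every $\zeta$ and every $j$. (Deriving $\operatorname{supp}\ell=\xE$ from (a)--(c) is itself a short but nontrivial argument that should be recorded.) A secondary soft spot: the assertion that $(\Psi_{x,h})_*(\rho\,\xdx{u})$ has a continuous density bounded below on a fixed neighborhood $W$ of $q_0$, uniformly in $(x,h)$, is the technical heart of the result and is compressed to one sentence; it is standard (coarea formula plus continuity of $D_\eta F$ and of $\rho$), but in a self-contained proof it must be written out, as it is precisely the content being borrowed from \cite{Sh-07}.
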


	\section*{Acknowledgements}
	L.M. is thankful for support through NSFC Grant 
        No. 12271364 and GRF Grant No. 11302823.
 
	\addcontentsline{toc}{section}{References}
	\bibliography{references}
	\bibliographystyle{abbrv}
\end{document}